\documentclass[11pt,a4paper,reqno]{amsart}

\usepackage[utf8]{inputenc}
\usepackage{amsmath, amssymb, amsthm, amsxtra}
\usepackage{mathtools}
\usepackage{mathrsfs}
\usepackage[all]{xy}
\usepackage{tikz-cd}
\usetikzlibrary{hobby}
\usepackage[shortlabels]{enumitem}
\usepackage{tensor}
\usepackage{xcolor}
\usepackage{caption}
\usepackage{microtype}
\usepackage{etoolbox}
\usepackage{fullpage}

\usepackage[
    backend=biber,
    style=numeric,        
    maxbibnames=99,       
    doi=true,      
    eprint=true,   
    isbn=false,    % Hides ISBN
    url=true       % Allows URL only if needed
]{biblatex}

\AtEveryBibitem{
  \iffieldundef{doi}{}{\clearfield{url}}
  \clearfield{issn}
}
\usepackage{hyperref}
\hypersetup{
    colorlinks=true,
    linkcolor=blue,
    citecolor=blue,
    urlcolor=blue
}

\newtheoremstyle{slantedplain}{}{}{\slshape}{}{\bfseries}{.}{.5em}{}

\theoremstyle{slantedplain}
\newtheorem{theorem}{Theorem}[section]
\newtheorem{proposition}[theorem]{Proposition}
\newtheorem{lemma}[theorem]{Lemma}
\newtheorem{corollary}[theorem]{Corollary}

\theoremstyle{definition}
\newtheorem{definition}[theorem]{Definition}
\newtheorem{remark}[theorem]{Remark}

\newtheorem{example}[theorem]{Example}

\newtheoremstyle{smallcaps}{}{0pt}{\normalfont}{}{\scshape}{.}{.5em}{}

\theoremstyle{smallcaps}

\AtEndEnvironment{proof}{
  \setcounter{case}{0}
  \setcounter{step}{0}
}

\setlist[enumerate,1]{label=\textnormal{(\roman*)}}
\setlist[description]{font=\normalfont\emph}

\DeclareMathOperator{\Aut}{Aut}

\DeclareMathOperator{\id}{id}

\DeclareMathOperator{\Spec}{Spec}

\DeclareMathOperator{\Graph}{Graph}
\DeclareMathOperator{\Char}{char} 

\newcommand{\ZZ}{\mathbb{Z}}
\newcommand{\QQ}{\mathbb{Q}}

\newcommand{\CC}{\mathbb{C}}
\newcommand{\PP}{\mathbb{P}}
\newcommand{\Hc}{\mathcal{H}}
\newcommand{\Hcbar}{\overline{\mathcal{H}}}
\newcommand{\Mcbar}{\overline{\mathcal{M}}}
\renewcommand{\Mc}{\mathcal{M}}

\makeatletter
\newcommand{\customlabel}[2]{%
   \protected@edef\@currentlabel{#2}\label{#1}%
}
\makeatother

\begin{document}

% --- Metadata ---
\title{Hyperelliptic Stable Curves}

\author{Max Schwegele}
\address{Institut für Algebra und Zahlentheorie, Universität Ulm, Helmholtzstrasse 18, D-89081 Ulm, Germany}
\email{max.schwegele@uni-ulm.de}

\date{\today} 
\subjclass[2020]{Primary 14H10; Secondary 14H45, 14D20} 
\keywords{Stable curves, hyperelliptic curves, moduli stacks, hyperelliptic involution}

\begin{abstract}
    We provide an intrinsic characterization of hyperelliptic stable curves of genus $g \geq 2$, independent of admissible covers or auxiliary moduli data. A stable curve is hyperelliptic if it admits an involution yielding a rational tree quotient, subject to a characteristic-dependent condition. By analyzing the action of this involution on the nodes and decomposing the curve based on its connectivity, we obtain an explicit structural description of hyperellipticity and prove that the hyperelliptic involution is unique. Furthermore, we explain the connection to the very ampleness of the dualizing sheaf. This framework applies in arbitrary characteristic, explicitly capturing the divergent geometric and combinatorial behavior in characteristic $2$. We verify that this formulation precisely captures the geometric points of the moduli stack of hyperelliptic stable curves $\Hcbar_g$, defined as the scheme-theoretic closure of the smooth hyperelliptic locus $\Hc_g$ within the moduli stack of stable curves $\Mcbar_g$. Extending this definition to flat families yields an explicit modular description of $\Hcbar_g$ over $\Spec \ZZ[1/2]$.
\end{abstract}

\maketitle

% --- Main Content ---
\section{Introduction}

Let $g \geq 2$. We denote by $\mathcal{M}_g$ the moduli stack of smooth curves of genus $g$ and by $\overline{\mathcal{M}}_g$ its Deligne--Mumford compactification, which parametrizes stable curves. Within $\overline{\mathcal{M}}_g$, we consider the locus $\mathcal{H}_g$ of smooth hyperelliptic curves. We define the moduli stack of hyperelliptic stable curves, denoted $\overline{\mathcal{H}}_g$, as the scheme-theoretic closure of $\mathcal{H}_g$ in $\overline{\mathcal{M}}_g$. A stable curve $C$ over an algebraically closed field $k$ is called hyperelliptic if it corresponds to a geometric point of $\overline{\mathcal{H}}_g$.

Our main objective is to provide an intrinsic and practical criterion for when a stable curve $C$ is hyperelliptic. This is motivated by the fact that standard definitions in the literature can be cumbersome for concrete geometric applications. Many sources adopt a moduli-theoretic approach to define hyperelliptic stable curves (see \cite{cornalba1988divisor, harris1998moduli, ACGII}). This perspective depends on extrinsic data, making it impractical when one wishes to verify hyperellipticity using only the geometry of a given curve. An alternative approach utilizes the theory of admissible covers (see \cite{harris1982kodaira, harris1998moduli, abramovich2003twisted}). While more geometric, it shares a similar drawback: it focuses on the degeneration of the cover, where the source of the degenerated cover is generally not the stable limit itself. Instead, one must subsequently contract components to obtain the actual stable curve (see \cite[Theorem 3.160]{harris1998moduli}). Our intrinsic approach circumvents these issues, allowing one to verify hyperellipticity directly from the geometry of a concrete curve without relying on such auxiliary data.

To achieve this via a systematic decomposition, it is natural to work in the more general setting of pointed stable curves. More precisely, we say a stable pointed curve $(C, P)$ is \emph{hyperelliptic} if it admits an involution $\sigma \in \Aut_k(C, P)$ such that the quotient $T := C/\langle \sigma \rangle$ is a rational tree (i.e., a semistable curve of arithmetic genus $0$), and, if $\Char k \neq 2$, the involution $\sigma$ does not act as the identity on any irreducible component of $C$. Such an automorphism $\sigma$ is called a \emph{hyperelliptic involution}.

With this definition in place, our main results are the following:

\begin{theorem}[see Theorem~\ref{thm:uniqueness_hyperelliptic}]
    The hyperelliptic involution on a hyperelliptic stable pointed curve is uniquely determined.
\end{theorem}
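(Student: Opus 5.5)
I would prove uniqueness by induction on the number of irreducible components of $C$, decomposing the curve along nodes in the same way the paper's systematic decomposition of pointed curves suggests. Let $\sigma,\tau$ be two hyperelliptic involutions of the stable pointed curve $(C,P)$.

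\textbf{Smooth case.} For a smooth connected pointed curve of genus $\geq 2$, an involution with quotient $\PP^1$ is the unique hyperelliptic involution. The reason is that the $g^1_2$ is unique, since it determines the canonical map. This holds in every characteristic.

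\textbf{Genus $0$ and genus $1$ components.} These need a separate argument, because a smooth component of low genus carries many involutions with rational quotient. Here I would use the special points. Let $E$ be a component with its nodes and marked points regarded as special points. Stability gives at least three special points on a rational component, and at least one on an elliptic component. The idea is that the hyperelliptic involution is forced by how it permutes these special points.

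\textbf{Global structure.} First I would show that $\sigma$ and $\tau$ induce the same permutation of components and of nodes. The key combinatorial fact is that the quotient of the dual graph of $C$ by $\sigma$ is (essentially) the dual graph of the rational tree $T$. Hence $\sigma$ must do the following:
\begin{enumerate}
\item swap the two branches of every cycle-creating pair of edges, i.e.\ act on $H_1$ of the dual graph by $-1$;
\item fix every component whose removal behaviour is tree-like.
\end{enumerate}
I would make this precise by splitting $C$ at a separating node, or at a pair of nodes forming a disconnecting set. Each such node (or node pair) must be fixed (or swapped) by any hyperelliptic involution, because the quotient is a tree. This splitting yields hyperelliptic pointed curves with fewer components, where the new points are the preimages of the node. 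By induction, $\sigma$ and $\tau$ agree on each piece, and they glue uniquely.

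\textbf{Base case: two-connected dual graph.} The remaining base case is a curve whose dual graph has no separating node or pair. Here I would argue that on each component $E$, the restriction $\sigma|_E$ is an involution with $E/\sigma|_E \cong \PP^1$. Moreover, the set of special points on $E$ and its pairing under $\sigma$ are determined by the graph. Two involutions of $\PP^1$, or of an elliptic curve, that agree on the pairing of at least three (respectively enough) special points coincide. Since $\sigma\tau^{-1}$ is an automorphism acting trivially on the dual graph and preserving all special points, it is the identity.

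\textbf{Main obstacle.} I expect the hardest step to be controlling components on which $\sigma$ acts trivially. In characteristic $\neq 2$ this is excluded by definition. In characteristic $2$, however, $\sigma$ may fix a component pointwise, or have a single fixed point, because of wild ramification. In that case the pairing argument for special points gives less information. I would first try to show that the components fixed by $\sigma$ are exactly those whose nodes are all fixed in a tree-like way, a condition read off from the dual graph. This identifies them intrinsically, and then $\sigma\tau$ is trivial on them as well.
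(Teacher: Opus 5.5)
Your overall plan matches the paper's strategy: cut along separating nodes and separating pairs, induct, and finish with a base case that has no such cuts. However, two steps fail as written.

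\textbf{The separating-pair step.} If $\{p,q\}$ is a separating pair, every hyperelliptic involution swaps $p$ and $q$. So its restriction to a piece $C_i$ interchanges the two preimages in $S_i$. If you add these preimages as new marked points, as you propose, the restriction is no longer in $\Aut_k(C_i,P_i)$, because a hyperelliptic involution must fix every marked point. Hence neither $\sigma|_{C_i}$ nor $\tau|_{C_i}$ is a hyperelliptic involution of the piece, and your induction hypothesis says nothing about them.

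The paper avoids this with the contracted components of Definition~\ref{def:contracted_components}: glue the two points of $S_i$ into a new node $r_i$. The restriction then descends to an involution of $(C_i',P_i')$ that fixes $r_i$ and swaps its branches, with quotient $C_i/\langle\sigma_i\rangle\subset T$. Uniqueness on $C_i'$, together with uniqueness of lifts to the partial normalization, gives $\sigma|_{C_i}=\tau|_{C_i}$ (Lemma~\ref{lem:decomp_sep_pair}). Alternatively, you could strengthen the induction statement to allow involutions that swap prescribed pairs of marked points.

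Also, the fixed/swapped behaviour of these nodes does not follow from ``the quotient is a tree'' alone. To exclude $\sigma$ exchanging two separating nodes, you need stability: the two outer pieces would be exchanged, so they would be rational trees carrying a single special point (Lemma~\ref{lem:orbit_decomposition_preserved}, Proposition~\ref{prop:node_classification}).

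\textbf{The base case.} You claim that $\sigma$ restricts to an involution of each component $E$ with $E/\sigma|_E\simeq\PP^1$. This is false for binary curves, where $\sigma$ interchanges the two components. The missing step is the dichotomy of Lemma~\ref{lem:unique_2_inse}:
\begin{itemize}
\item In a $2$-inseparable curve every node is strongly non-separating, so both $\sigma$ and $\tau$ fix it and interchange its branches.
\item A node joining distinct components $Z_1\neq Z_2$ therefore forces $\sigma(Z_1)=Z_2=\tau(Z_1)$. Hence each component meets at most one other component.
\item So $C$ is either irreducible, where you argue on the normalization, or it consists of two rational components without marked points, swapped by both involutions.
\item In the second case, $\sigma|_{Z_1}$ and $\tau|_{Z_1}$ are isomorphisms $Z_1\to Z_2$ that agree on at least three node preimages, so they coincide.
\end{itemize}

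Two smaller points. First, your closing principle, that an automorphism acting trivially on the dual graph and fixing all special points is the identity, is false for components of positive genus. The argument rests entirely on the componentwise uniqueness of involutions with rational quotient. Second, the characteristic-$2$ obstacle you anticipate does not arise. In the base case, $\sigma$ can be the identity on a component only if $C\simeq\PP^1$ is smooth with at least three marked points. Then $\tau$ fixes those points too, so $\tau=\id$.
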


Our inductive proof of this uniqueness naturally yields a canonical decomposition, provided that the curve is of semicompact type---a combinatorial condition on the global connectivity of its nodes. This procedure decomposes the curve into a collection of $2$-inseparable components, i.e., curves with $3$-edge-connected dual graphs. Utilizing this, we obtain a complete structural characterization:

\begin{theorem}[see Theorem~\ref{thm:structure_hyperelliptic}]
    A stable pointed curve $(C, P)$ is hyperelliptic if and only if it is of semicompact type and all its $2$-inseparable components are hyperelliptic. For these components to be hyperelliptic, it is necessary that they are either irreducible or binary curves (the union of two smooth rational components intersecting at multiple nodes), and their hyperellipticity can be explicitly characterized via their normalization, or alternatively, via the dualizing sheaf of an associated pinched curve.
\end{theorem}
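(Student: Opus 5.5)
The natural route is to treat the structure theorem as an induction on the number of "separating pairs" of nodes, reducing to the $2$-inseparable case. Say that $(C,P)$ is of semicompact type if every cycle of its dual graph that can be disconnected by removing two edges behaves like a single cut. I will use the version: every node is either separating, or lies in a pair of nodes whose removal disconnects $C$, or lies inside a $3$-edge-connected block. There are three key steps.

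\emph{Step 1: a hyperelliptic involution $\sigma$ respects the decomposition.} The quotient map $\pi\colon C\to T$ is finite of degree $2$ onto a rational tree $T$. Each node $q$ of $T$ disconnects $T$, so $\pi^{-1}(q)$ disconnects $C$. Hence $\pi^{-1}(q)$ is either a single separating node fixed by $\sigma$, or a pair of nodes swapped by $\sigma$ forming a $2$-edge cut. Nodes of $C$ mapping to smooth points of $T$ arise only from components folded onto themselves, and these cannot occur in characteristic $\neq 2$ because $\sigma$ is not the identity on any component. In characteristic $2$ they require a separate analysis via wild ramification. In either case every edge of the dual graph is either a bridge, part of a $2$-cut swapped by $\sigma$, or internal to a piece lying over a single component of $T$. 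This shows that $C$ is of semicompact type.

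\emph{Step 2: cutting and gluing.} Cut $C$ at all separating nodes and at all $2$-cuts, marking the resulting points, so that each $2$-cut contributes a pair of marked points on each side. The $2$-inseparable components are the resulting pointed curves, and $\sigma$ restricts to each of them. On each piece the restriction maps onto a subtree of $T$. Conversely, suppose each component carries a hyperelliptic involution $\sigma_i$ which fixes the marked points coming from separating nodes and swaps the two points of each $2$-cut pair; the latter is forced by uniqueness (Theorem~\ref{thm:uniqueness_hyperelliptic}) together with a local analysis. Then the $\sigma_i$ glue to an involution of $C$. Its quotient is obtained by gluing rational trees at single points, which again gives a rational tree. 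The hypothesis in characteristic $\neq 2$ passes through unchanged. To make this precise, I must build the pointed structure so that hyperellipticity of a component means exactly that the involution has the right behaviour on these points.

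\emph{Step 3: classification of the $2$-inseparable case.} Here $T$ has no node whose preimage is a cut, so $T$ is irreducible, $T\cong\PP^1$. The map $C\to\PP^1$ has degree $2$, so $C$ has at most two components. If there is one component, it is irreducible. If there are two, each maps isomorphically to $\PP^1$ and they are swapped, which gives a binary curve. For irreducible $C$ with normalization $\tilde C$, hyperellipticity amounts to $\tilde C$ being hyperelliptic (or rational) with each pair of preimages of a node conjugate under its involution. For the dualizing-sheaf description, I pass to the pinched curve and use that $\omega$ fails to be very ample exactly when a degree-$2$ map to $\PP^1$ exists. This is the standard Riemann--Roch argument on Gorenstein curves.

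The main obstacle is Step 1: proving that a hyperelliptic curve is of semicompact type, and in particular controlling characteristic $2$, where $\sigma$ may fix components pointwise and the local structure of the quotient at nodes differs.
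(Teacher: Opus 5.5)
There are genuine gaps, mainly in Steps 1 and 2.

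\textbf{Step 1.} It contains a false claim, and it proves only the easy half of what is needed. Nodes of $C$ over smooth points of $T$ are exactly the nodes fixed by $\sigma$ with interchanged branches (type $\Xi_{\mathrm{irr}}$), and they occur in every characteristic. All nodes of a binary curve and all nodes of an irreducible nodal hyperelliptic curve are of this kind, and these are the very curves you obtain in Step 3.

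What you actually show is that the fibre of $\pi$ over a node of $T$ is a separating node or a separating pair. Semicompact type, however, means that each node lies in \emph{at most one} separating pair. Your reformulation does not capture this: a cycle of three elliptic curves satisfies it, yet each node lies in two separating pairs. For semicompact type you need the converse, namely that every separating pair $\{p,q\}$ of $C$ is a $\sigma$-orbit. Nothing in your sketch excludes that a node over a smooth point of $T$, or a node $p$ with $\sigma(p)\neq p$, lies in a further separating pair.

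The paper gets this from two characteristic-free ingredients:
\begin{enumerate}
\item A stability argument (Lemma~\ref{lem:orbit_decomposition_preserved}). If $\sigma$ interchanges two pieces of the decomposition along a $\sigma$-invariant node set $S$, those pieces are rational trees carrying at least three preimages of $S$. Hence for $|S|\le 2$ every piece is $\sigma$-stable.
\item The graph lemma (Lemma~\ref{lem:chains_of_cuts}) that chained separating pairs force a cyclic decomposition.
\end{enumerate}
Applied to $S=\{p,q,\sigma(p),\sigma(q)\}$, these give $S=\{p,q\}$. Moreover $\sigma(p)=p$ is impossible, since $\sigma$ would then swap the branches of the non-separating node $p$, hence the two sides of the cut. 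No wild-ramification analysis is needed, so the characteristic-$2$ obstacle you anticipate does not arise.

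\textbf{Step 2.} It builds the wrong components. If you mark both preimages of a separating pair on each side, the restriction of $\sigma$ interchanges them, so it does not lie in $\Aut_k(C_i,P_i)$. A hyperelliptic involution of a pointed curve fixes its marked points by definition, and invoking uniqueness cannot change that.

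For instance, glue elliptic curves $E_1,E_2$ at $p_i\sim p$ and $q_i\sim q$. The result is hyperelliptic, using $x\mapsto p_i+q_i-x$ on each $E_i$. But $(E_i,(p_i,q_i))$ is hyperelliptic only if $q_i-p_i$ is $2$-torsion. Conversely, involutions fixing both marked points glue to one whose quotient is two trees joined at two points, which has arithmetic genus $1$.

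The missing idea is the contracted component (Definition~\ref{def:contracted_components}): glue the two points into a new node $r_i$. Semicompact type makes $r_i$ strongly non-separating in $C_i'$. By Proposition~\ref{prop:node_classification}, every hyperelliptic involution of $C_i'$ then fixes $r_i$ and swaps its branches. This is exactly what the gluing requires, and exactly where semicompact type enters the converse direction.

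\textbf{Step 3.} The dualizing-sheaf criterion is not a routine Riemann--Roch fact for singular reducible Gorenstein curves. Very ampleness can also fail because of separating nodes or pairs, or because of the gap between Catanese's hyperelliptic and honestly hyperelliptic curves. This is why the paper needs $2$-inseparability (very strong connectedness) together with Catanese's Theorems~D--G. You also still have to show three things:
\begin{enumerate}
\item The quotient map factors through the cusps of the pinched curve, i.e.\ the invariant ring at a fixed point has no linear term. This must also be checked in the wild case $\sigma(t)=t+h(t)$ with $h\in(t^2)$.
\item Conversely, the Galois involution of a degree-$2$ map $C'\to\PP^1$ fixes the cusps.
\item The purely inseparable case in characteristic $2$ must be handled separately.
\end{enumerate}
Your normalization criterion also omits the case $g(\tilde C)=1$. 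There you need an involution with quotient $\PP^1$ that fixes $\tilde P$ and swaps the preimages of each node.
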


Finally, we establish that our intrinsic geometric definition perfectly aligns with the moduli-theoretic perspective:

\begin{theorem}[see Theorem~\ref{thm:geometric_points}]
    A stable curve $C$ over an algebraically closed field $k$ represents a geometric point of the stack closure $\overline{\mathcal{H}}_g$ if and only if $C$ is a hyperelliptic stable curve in the sense of the above definition.
\end{theorem}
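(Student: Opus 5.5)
The plan is to prove the two implications separately, using that $\Hcbar_g$ is the closure of $\Hc_g$ and that its geometric points are exactly the stable curves arising as stable limits of smooth hyperelliptic curves over a DVR. For an algebraically closed field $k$, a point $[C]\in\Mcbar_g(k)$ lies in the closure if and only if there exist a DVR $R$ with residue field $k$ (after a finite extension) and a stable curve $\mathcal{C}\to\Spec R$ whose generic fibre is smooth hyperelliptic and whose special fibre is $C$. This uses the valuative description of closure in a Deligne--Mumford stack of finite type, and it works over $\ZZ$ as well, so characteristic $2$ is covered.

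For the direction ($\Rightarrow$), take such a family $\mathcal{C}\to\Spec R$ and let $\sigma_\eta$ be the hyperelliptic involution of the generic fibre.

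1. Since $\mathcal{C}$ is stable, $\operatorname{Isom}$ is finite and unramified over the base. Hence $\sigma_\eta$ extends, after base change, to an involution $\sigma$ of $\mathcal{C}$ over $R$, and its restriction $\sigma_0$ is an involution of $C$.
2. The quotient $\mathcal{C}/\langle\sigma\rangle$ is flat over $R$, with generic fibre $\PP^1$. Its special fibre is $C/\langle\sigma_0\rangle$, because taking invariants commutes with base change for a group of order $2$ when $2$ is invertible. In characteristic $2$ one only gets a finite birational map from $C/\langle\sigma_0\rangle$ to the special fibre of the quotient. In both cases the quotient has arithmetic genus $0$, is connected and reduced, and hence is a rational tree.
3. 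In characteristic $\neq 2$ we must show that $\sigma_0$ is not the identity on any component. Suppose it fixed a component $Z$. The fixed locus of $\sigma$ is then a closed subscheme flat over $R$ near a general point of $Z$. But the generic fixed locus is finite, namely the $2g+2$ Weierstrass points, and its closure cannot contain a curve. This is a contradiction.

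For the direction ($\Leftarrow$), the task is to deform $(C,\sigma)$ to a smooth hyperelliptic curve. Here I would use the structure theorem (Theorem~\ref{thm:structure_hyperelliptic}): $C$ is of semicompact type and decomposes into hyperelliptic $2$-inseparable pieces, which are irreducible or binary curves. The argument has two stages.

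1. Smooth each piece equivariantly. For an irreducible piece, the normalization together with the node data is a double cover of $\PP^1$ with pairs of glued points that are either conjugate or fixed. For a binary piece, we have two $\PP^1$'s swapped by $\sigma$ or each preserved. In each case one writes an explicit family $y^2+h(x)y=f_t(x)$, with branch points colliding as $t\to 0$, whose stable limit is the given piece. Alternatively, one deforms the $\sigma$-equivariant nodes via the $\sigma$-invariant part of the versal deformation space $\prod_{\text{nodes}} k[[t_i]]$.
2. Glue the pieces back along the separating nodes and node pairs. The same equivariant smoothing of these gluing nodes produces a one-parameter family over $k[[t]]$ with smooth generic fibre admitting an involution with quotient $\PP^1$.

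I expect this converse to be the main obstacle, specifically showing that the $\sigma$-invariant smoothing directions are unobstructed and smooth all nodes simultaneously. The cleanest route is the following. Stable curves have unobstructed deformations, and since $\sigma$ has order $2$ we can take invariants, at least when $\Char k\neq 2$. The invariant locus in the versal space is therefore smooth. Its tangent space maps onto the invariant part of $\bigoplus_{\text{nodes}} \operatorname{Ext}^1$, in which every $\sigma$-orbit of nodes has a nonzero invariant smoothing parameter. This needs checking at fixed nodes whose branches are swapped, where $\sigma$ acts on $t_i$ by $+1$ rather than $-1$, and this is exactly where the "not identity on a component" hypothesis enters.

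In characteristic $2$ I would instead use explicit Artin--Schreier-type equations on each piece, glued by hand.
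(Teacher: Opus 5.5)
Your overall structure matches the paper's. Membership in $\Hcbar_g$ is tested by DVR specializations, and the two directions become a specialization statement and an equivariant smoothing statement. The paper proves neither of these itself: it cites them as Propositions~\ref{prop:specialization_hyperelliptic} and~\ref{prop:existence_smooth_lift}, namely Kawaguchi and Maugeais for specialization, and Maugeais and (for $\Char k\neq 2$) Ekedahl for smoothing.

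Your tame arguments are essentially Ekedahl's and go through, with one correction. Condition~\ref{def:hyperelliptic_ii} is not what makes fixed nodes with swapped branches smoothable. There you can choose coordinates with $\sigma(x)=y$, $\sigma(y)=x$, so $t=xy$ is automatically invariant. The condition is needed at fixed nodes with preserved branches, i.e.\ the separating nodes. There $\sigma(x)=\epsilon_1x$, $\sigma(y)=\epsilon_2y$, so $\sigma$ acts on $t$ by $\epsilon_1\epsilon_2$. Condition~\ref{def:hyperelliptic_ii} forces $\epsilon_1=\epsilon_2=-1$, whereas $\sigma$ being the identity on exactly one of the two components would give $t\mapsto -t$. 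You should also say why the generic quotient is $\PP^1$: $\mathcal{C}/\langle\sigma\rangle$ is flat and taking invariants commutes with base change. On this route the decomposition into $2$-inseparable pieces is not needed.

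The genuine gap is characteristic $2$, which the statement covers and which is a central point of the paper. For $(\Leftarrow)$ you only say you would write Artin--Schreier equations on each piece and glue by hand. That is not an argument, and none of the tame mechanism survives:
\begin{enumerate}
\item Taking $\sigma$-invariants is no longer exact. So the $\sigma$-fixed locus of the versal deformation space need not be smooth, and its tangent space need not surject onto the invariant local smoothing parameters.
\item The actions at fixed points and nodes are wild and cannot be linearized, so there is no character on $t_i$ to read off.
\item Separately smoothed pieces do not glue into one family without exactly such a local-to-global statement.
\item You must also smooth curves like Example~\ref{ex:char2_exclusive}. There $\sigma_0$ is the identity on the central $\PP^1$ while the generic involution is nontrivial, a degeneration your own step~3 shows never occurs tamely.
\end{enumerate}
This is a genuine lifting problem for wild $2$-cyclic covers of rational trees. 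It is what Maugeais' Th\'eor\`eme~5.4 supplies, using local lifting of wild involutions, modifications at nodes of type $\Xi_{\mathrm{irr}}$, and the Frobenius over components with trivial action.

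Your characteristic-$2$ version of $(\Rightarrow)$ also needs repair. On components where $\sigma_0=\id$, the map $C/\langle\sigma_0\rangle\to(\mathcal{C}/\langle\sigma\rangle)_k$ is the relative Frobenius, not a birational map. Also, ``connected, reduced, $p_a=0$'' does not imply nodal; three non-coplanar concurrent lines are a counterexample. This step can be fixed: the special fibre of the normal quotient is reduced because $C$ is, the comparison map is a universal homeomorphism, and Raynaud's result gives semistability. As written, however, the step is unjustified.
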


Generalizing the notion of hyperelliptic curves to families, this approach yields a concrete modular description of the moduli stack $\overline{\mathcal{H}}_g$ over $\Spec \ZZ[1/2]$ as a substack of $\overline{\mathcal{M}}_g$. Normally, this stack is described via the theory of admissible covers.

These results are not entirely new and are likely mostly known to experts. Variations of our intrinsic definition of hyperelliptic stable curves have appeared in the literature. Over the complex numbers $\CC$, an equivalent definition is used by Arbarello, Cornalba, and Griffiths \cite[Chapter X, Section 3]{ACGII}. It is well understood in the literature that this definition extends without complications to arbitrary algebraically closed fields with $\Char k \neq 2$ (an explicit formulation can be found, for example, in \cite[Definition 4.1]{kawaguchi2015rank}). The general case, including characteristic $2$, first appears in the work of Maugeais \cite[Definition 3.12]{maugeais2003relevement} (we refer to Remark~\ref{rem:literature_definition} for a detailed comparison of these definitions across the literature). Regarding the uniqueness of the involution, a proof for unpointed curves over $\CC$ was given in \cite[Chapter X, Lemma 3.5]{ACGII}, proceeding by induction on the number of components of the quotient tree $T$. Our approach, by contrast, inducts directly on the separating nodes and pairs of $C$. The terminology for this decomposition strategy---such as \emph{semicompact} curves and \emph{$2$-inseparable} components---is borrowed from Ran \cite{ran2014canonical}, though our intrinsic geometric methods remain entirely independent. This direct focus offers the distinct advantage of allowing one to systematically deduce the potential hyperellipticity of a combinatorial type without needing \emph{a priori} knowledge of the quotient map to $T$.

Unlike most sources in the literature, our framework explicitly incorporates fields of characteristic $2$, allowing us to thoroughly investigate the divergent behavior of hyperelliptic stable curves in this setting. By dropping the second condition of our definition---namely, the requirement that the involution does not act as the identity on any irreducible component---we admit new geometric configurations. Proposition~\ref{prop:hyperelliptic_genus2} precisely explains which combinatorial types are affected by this relaxation, giving rise to cases that are exclusively hyperelliptic in characteristic $2$. Note that since $\overline{\mathcal{M}}_2 = \overline{\mathcal{H}}_2$, every stable curve of genus $2$ is automatically hyperelliptic. Thus, genus $g=3$ represents the first case where these characteristic-dependent phenomena can be observed. The phenomenon described by Proposition~\ref{prop:hyperelliptic_genus2} is illustrated for a genus $3$ curve in Example~\ref{ex:char2_exclusive}. Conversely, as a consequence of wild ramification, certain combinatorial types are forced to be strictly non-hyperelliptic in characteristic $2$; we demonstrate this in Example~\ref{ex:char2_forbidden}. An exhaustive analysis of all $g=3$ cases can be found in our joint work on the semistable reduction of plane quartics \cite[Proposition 1.14]{quartic_paper}.

\medskip\noindent
\textbf{Structure of the paper.} In Section~\ref{sec:stable_curves}, we fix the notation and review the combinatorial properties of stable curves and their dual graphs. Section~\ref{sec:involution} is devoted to the definition of the hyperelliptic involution and provides a complete geometric classification of the nodes based on the action of the involution. Section~\ref{sec:uniqueness} proves the uniqueness of the hyperelliptic involution using an inductive decomposition approach and derives the structural characterization. Finally, Section~\ref{sec:moduli} extends these concepts to flat families over arbitrary base schemes, examines the geometric points of the moduli stack $\overline{\mathcal{H}}_g$, and concludes with the combinatorial stratification of the boundary.
\section{Stable Curves and Dual Graphs} \label{sec:stable_curves}

In this section, we fix the notation and review the combinatorial properties of stable curves necessary for our subsequent decomposition results. We fix an algebraically closed base field $k$, and all curves are assumed to be defined over $k$.

\subsection{Stable Pointed Curves}

We begin by establishing our conventions for semistable curves and their stability conditions in the pointed setting. Throughout this paper, by a \emph{semistable curve}, we mean a connected, projective, reduced scheme of dimension one over $k$ whose only singularities are ordinary double points (nodes). We denote the arithmetic genus of a semistable curve $C$ by $g(C) := p_a(C)$.

\begin{definition} \label{def:stable_pointed_curve}
    An $n$-pointed semistable curve is a pair $(C, P)$ consisting of a semistable curve $C$ and an $n$-tuple $P = (p_1, \dots, p_n)$ of distinct smooth points on $C$. We call $(C, P)$ a \emph{stable $n$-pointed curve} if the automorphism group $\Aut_k(C, P)$---the group of automorphisms of $C$ that fix each point $p_i \in P$---is finite. When $n=0$, we omit the tuple $P$ and simply refer to the stable curve $C$.
\end{definition}

This finiteness condition is equivalent to the ampleness of the log-canonical sheaf $\omega_{C/k}(P)$, where $P$ denotes the divisor $p_1 + \dots + p_n$. More explicitly, this requires that $2g(Z) - 2 + s_Z > 0$ for every irreducible component $Z$ of $C$, where $s_Z$ denotes the number of special points on $Z$ (the marked points and the nodes connecting $Z$ to the rest of the curve). Observe that this condition is non-trivial only for components of arithmetic genus $g(Z) = 0$ or $g(Z) = 1$.

For local analysis, it is often necessary to decompose a pointed semistable curve $(C, P)$ at a set of nodes $S$. Let $\nu\colon \tilde{C} \to C$ denote the partial normalization of $C$ at $S$, which induces a decomposition of $\tilde{C}$ into connected components $\tilde{C} = C_1 \sqcup \dots \sqcup C_m$. We naturally identify the smooth points in $P$ with their unique preimages in $\tilde{C}$. By equipping each component $C_i$ with the marked points $P_i := (\nu^{-1}(S) \cap C_i) \cup (P \cap C_i)$, we obtain the \emph{pointed decomposition} $\{(C_1, P_1), \dots, (C_m, P_m)\}$ of $(C,P)$ induced by $S$.

\begin{lemma} \label{lem:stable_decomposition}
    Let $(C, P)$ be a pointed semistable curve, and let $S$ be a set of nodes of $C$. Then $(C, P)$ is stable if and only if every component $(C_i, P_i)$ in the induced pointed decomposition is stable.
\end{lemma}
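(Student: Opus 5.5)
The plan is to use the numerical criterion stated just after Definition~\ref{def:stable_pointed_curve}: $(C,P)$ is stable if and only if $2g(Z)-2+s_Z>0$ for every irreducible component $Z$ of $C$. The whole argument then reduces to comparing, component by component, the invariants $g(Z)$ and $s_Z$ computed in $(C,P)$ with those computed in the pointed decomposition.

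First, record that the partial normalization $\nu\colon \tilde C\to C$ at $S$ induces a bijection between irreducible components. Each irreducible component $Z$ of $C$ has a unique strict transform $\tilde Z\subseteq \tilde C$, and $\tilde Z$ lies in exactly one connected component $C_i$. Each $C_i$ is again a semistable curve: it is connected by definition, and its only singularities are the nodes of $C$ not in $S$. The marked points $P_i$ are distinct smooth points of $C_i$, since the preimages of nodes in $S$ are smooth points of $\tilde C$. So the criterion applies to each $(C_i,P_i)$ as well.

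The key step is to show that $g(\tilde Z)=g(Z)$ and that $s_{\tilde Z}$ computed in $(C_i,P_i)$ equals $s_Z$ computed in $(C,P)$. For the genus, the issue is self-nodes of $Z$ lying in $S$. If a node of $S$ has both branches on $Z$, normalizing it lowers $g(Z)$ by one. However, in the component count it contributes two marked points on $\tilde Z$, whereas in $C$ it counts as a special point of $Z$ with multiplicity two (both branches). The cleanest way to handle this is to interpret $s_Z$ as the number of special points on the normalization of $Z$, i.e.\ branches of nodes plus marked points, and $g(Z)$ accordingly, which is the standard convention behind ampleness of $\omega_C(P)$. With this convention both invariants are literally unchanged.

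To avoid convention issues altogether, I would rather argue via the log-canonical degree, $\deg \omega_C(P)|_Z = 2g(Z)-2+s_Z$, where $s_Z$ counts nodes of $Z$ with the rest of $C$ once and self-nodes twice. Then I would verify $\nu^*\omega_C(P)\cong\omega_{\tilde C}\bigl(\nu^{-1}(S)+P\bigr)$, the standard fact that the pullback of the dualizing sheaf under normalization of a node equals the dualizing sheaf twisted by both preimages. Restricting to $\tilde Z$ gives equal degrees. I expect this bookkeeping of self-nodes to be the main obstacle.

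Finally, conclude: $(C,P)$ is stable if and only if $2g(Z)-2+s_Z>0$ for all $Z$, if and only if the same inequality holds for all $\tilde Z$, if and only if each $(C_i,P_i)$ is stable, since every component of every $C_i$ arises as some $\tilde Z$.
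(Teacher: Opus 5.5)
Your degree-based argument is correct and is essentially the paper's proof. Both rest on $\nu^*\omega_{C/k}(P)\cong\omega_{\tilde C/k}(\nu^{-1}(S)+P)$. The paper concludes because ampleness is detected after a finite surjective pullback and componentwise on a disjoint union; you conclude via positivity of degrees on each $\tilde Z$, since $\tilde Z\to Z$ is birational and so degrees agree.

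Drop the preliminary claim that $g(\tilde Z)=g(Z)$ and $s_{\tilde Z}=s_Z$ hold separately. With the paper's conventions (arithmetic genus, $s_Z$ not counting self-nodes), a self-node in $S$ lowers $g$ by one and raises $s$ by two, so only $2g-2+s$ is invariant. Likewise, your formula that counts self-nodes twice is correct only if $g(Z)$ means the geometric genus.
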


\begin{proof}
    Recall that stability is equivalent to the ampleness of the log-canonical sheaf $\omega_{C/k}(P)$. Since the partial normalization $\nu\colon \tilde{C} \to C$ is a finite surjective morphism, $\omega_{C/k}(P)$ is ample on $C$ if and only if its pullback $\nu^*\omega_{C/k}(P)$ is ample on $\tilde{C}$. The conclusion then follows from the canonical isomorphisms $\nu^*\omega_{C/k}(P)|_{C_i} \cong \omega_{C_i/k}(P_i)$ and the fact that ampleness on a disjoint union is equivalent to ampleness on each individual connected component.
\end{proof}

\subsection{Graph of a Decomposition}

Given a decomposition of $(C,P)$ induced by a set of nodes $S$, we associate to it a graph denoted $\Gamma = \Graph^S(C,P)$. In the case where $S$ constitutes the set of all nodes of $C$, the graph $\Gamma$ coincides with the standard dual graph. The vertices of $\Gamma$ correspond to the components $V = \{C_1, \dots, C_m\}$ of the decomposition. The edges of $\Gamma$ correspond to the nodes in $S$, with an edge connecting $C_i$ and $C_j$ if the corresponding node joins these two components. Additionally, the marked points in $P \cap C_i$ are represented as half-edges (or legs) attached to the vertex $C_i$. To retain the geometric data, each vertex $v \in V$ is equipped with a weight $g_v := g(C_i)$, recording the arithmetic genus of the corresponding component. With this data, the arithmetic genus of the original curve $C$ can be recovered as $g(C) = \sum_{v \in V} g_v + h^1(\Gamma)$, where $h^1(\Gamma)$ denotes the first Betti number of the graph $\Gamma$.

\subsection{Connectivity and Semicompact Type}
We classify the nodes of a semistable curve based on their impact on its overall connectivity.

\begin{definition} \label{def:comb_term}
Let $C$ be a semistable curve over $k$.
\begin{enumerate}
    \item A node $p$ is \emph{separating} if the partial normalization of $C$ at $p$ disconnects the curve. (Equivalently, the corresponding edge in the dual graph is a bridge.)
    \item A pair of distinct nodes $\{p, q\}$ is a \emph{separating pair} if the partial normalization of $C$ at $\{p, q\}$ disconnects the curve, but the partial normalization at either $p$ or $q$ individually does not. (Equivalently, the corresponding edges form a minimal 2-edge-cut in the dual graph.)
    \item A node $p$ is \emph{strongly non-separating} if it is neither separating nor part of a separating pair.
    \item The curve $C$ is \emph{inseparable}\footnote{This terminology is inspired by \cite{ran2014canonical}. This geometric notion should not be confused with the algebraic concept of an inseparable field extension or morphism.} if it has no separating nodes, and is \emph{separable} otherwise. It is \emph{2-inseparable} if it is inseparable and contains no separating pairs. (Combinatorially, these conditions correspond to the dual graph being 2-edge-connected and 3-edge-connected, respectively.)
    \item The curve $C$ is of \emph{semicompact type} if every node belongs to at most one separating pair.
\end{enumerate}
\end{definition}
\section{Hyperelliptic Stable Curves} \label{sec:involution}

In this section, we formally define hyperelliptic stable pointed curves and provide a complete geometric classification of their nodes based on the action of the hyperelliptic involution.

\subsection{The Hyperelliptic Involution}

In the smooth setting, hyperellipticity is characterized by the existence of an involution whose quotient is isomorphic to $\PP^1$. For stable curves, the target degenerates into a tree of rational curves.

\begin{definition} \label{def:hyperelliptic}
    A stable pointed curve $(C, P)$ is \emph{hyperelliptic} if there exists an involution $\sigma \in \Aut_k(C, P)$ such that:
    \begin{enumerate}
        \item the quotient $T := C/\langle \sigma \rangle$ is a rational tree (i.e., a semistable curve of arithmetic genus $0$); and
        \item \label{def:hyperelliptic_ii} if $\Char k \neq 2$, the involution $\sigma$ does not act as the identity on any irreducible component of $C$.
    \end{enumerate}
    Such an automorphism $\sigma$ is called a \emph{hyperelliptic involution}.
\end{definition}

Note that by \cite[Appendice, Corollaire de la Proposition 5]{raynaud2013p}, the quotient $T = C/\langle \sigma \rangle$ of a semistable curve by an involution is always semistable.

\begin{remark} \label{rem:literature_definition}
    The definition of hyperellipticity for unpointed stable curves appears in various forms across the literature. Over $\CC$, this definition can be found in \cite[Chapter X, Section 3]{ACGII}, where Condition~\ref{def:hyperelliptic_ii} is replaced by the equivalent requirement that $\sigma$ has only isolated fixed points. For arbitrary algebraically closed fields of characteristic different from $2$, it is articulated in \cite[Definition 4.1]{kawaguchi2015rank}. To the best of our knowledge, the general case over fields of arbitrary characteristic first appears in \cite[Definition 3.12]{maugeais2003relevement}. We note that the ``kummerienne'' condition imposed there for $2$-coverings in characteristic $\neq 2$ is precisely equivalent to our Condition~\ref{def:hyperelliptic_ii}. Another reference is \cite[Remark 1.2]{yamaki2004cornalba}, although Condition~\ref{def:hyperelliptic_ii} appears to have been inadvertently omitted in that text.
\end{remark}

We refer to Section~\ref{sec:moduli} for an explanation of why this formulation constitutes the correct intrinsic definition from a moduli-theoretic perspective.

\subsection{Classification of Nodes}

The nodes of a hyperelliptic stable curve admit a natural classification into distinct types (cf.\ \cite[p. 102]{ACGII}). This classification is well established in the study of the boundary divisors of $\Hcbar_g$ (cf.\ \cite{cornalba2006picard}). Although we do not rely on these moduli-theoretic connections, we adopt the standard notation for these node types to maintain consistency with the literature.

\begin{proposition} \label{prop:node_classification}
    Let $(C, P)$ be a hyperelliptic stable pointed curve with hyperelliptic involution $\sigma$ and quotient map $\pi\colon C \to T$, and let $p$ be a node of $C$. Then $p$ falls into exactly one of the three combinatorial types from Definition~\ref{def:comb_term}, which are characterized by the action of $\sigma$ as follows:
    \begin{enumerate}
        \item[$(\Xi)$] \customlabel{item:properties_nodes_sep}{$\Xi$} \textbf{Separating Node.} The node $p$ is separating if and only if it is a fixed point of $\sigma$ at which $\sigma$ preserves the local branches. In this case, its image $\pi(p)$ is a node of $T$, and $\sigma$ respects the decomposition of $C$ induced by $p$.
        
        \item[$(\Delta)$] \customlabel{item:properties_nodes_pair}{$\Delta$} \textbf{Part of a Separating Pair.} The node $p$ belongs to a separating pair if and only if it is not a fixed point of $\sigma$. In this case, its image under the involution, $p' := \sigma(p)$, is the unique node that forms a separating pair $\{p, p'\}$ with $p$. Both points map to the same node $\pi(p) = \pi(p')$ in $T$, and $\sigma$ respects the decomposition of $C$ induced by the pair $\{p, p'\}$.
        
        \item[$(\Xi_{\mathrm{irr}})$] \customlabel{item:properties_nodes_non}{$\Xi_{\mathrm{irr}}$} \textbf{Strongly Non-Separating Node.} The node $p$ is strongly non-separating if and only if it is a fixed point of $\sigma$ at which $\sigma$ interchanges the local branches. In this case, its image $\pi(p)$ is a smooth point of $T$, and the irreducible components of $C$ forming the node $p$ are either identical or both rational.
    \end{enumerate}
    In particular, the curve $C$ is of semicompact type. Moreover, the rational tree $T = C/\langle \sigma \rangle$ is irreducible (i.e., $T \simeq \PP^1_k$) if and only if $C$ is $2$-inseparable.\footnote{This fact appears to be folklore, and we are unaware of an explicit reference; cf.\ \cite[Remark 2.14]{ran2014canonical}.}
\end{proposition}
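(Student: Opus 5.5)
The plan is to first classify each node $p$ of $C$ by the action of $\sigma$ alone, and then match this trichotomy with the combinatorial trichotomy of Definition~\ref{def:comb_term}. Every node lies in exactly one of three classes: (a) $\sigma(p)\neq p$; (b) $\sigma(p)=p$ and $\sigma$ preserves the two local branches; (c) $\sigma(p)=p$ and $\sigma$ swaps the branches. The combinatorial types are also mutually exclusive and exhaustive. So it suffices to prove the three implications (a)$\Rightarrow\Delta$, (b)$\Rightarrow\Xi$ and (c)$\Rightarrow\Xi_{\mathrm{irr}}$. The converses then follow by exclusivity.

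\textbf{Local structure of the quotient.} In case (c), $\sigma$ acts on $\widehat{\mathcal O}_{C,p}\cong k[[x,y]]/(xy)$ by swapping $x$ and $y$ (after a coordinate change). The invariants are then generated by $x+y$, so $\pi(p)$ is a smooth point of $T$. In cases (a) and (b), $\pi(p)$ is a node of $T$.
\begin{itemize}
\item In (a) this holds because $\pi$ is étale near $p$.
\item In (b) the invariant ring is $k[[x^a,y^b]]/(x^ay^b)$, or $k[[x,y]]/(xy)$ if $\sigma$ is trivial on a branch. This uses Condition~\ref{def:hyperelliptic_ii} in characteristic $\neq 2$, and the semistability of $T$ from Raynaud in general.
\end{itemize}

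\textbf{Transfer to $T$.} The key global input is that $T$ is a tree, so every node $t$ of $T$ is separating. Normalize $T$ at $t$ to get $T=T_1\cup T_2$.
\begin{itemize}
\item \emph{Case (b), $p=\pi^{-1}(t)$.} The preimages $\pi^{-1}(T_i)$ are closed, $\sigma$-stable, and meet only in $p$. They are connected, because the quotient of each by $\sigma$ is connected and $\sigma$ fixes $p$, which lies in both. Hence $p$ is separating, and $\sigma$ preserves the decomposition.
\item \emph{Case (a), $\pi^{-1}(t)=\{p,\sigma(p)\}$.} The same argument shows that normalizing $C$ at $\{p,p'\}$ disconnects it.
\end{itemize}

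\textbf{Case (a): minimality of the pair.} We must show that neither $p$ nor $p'$ alone is separating. Suppose $p$ were separating. Then $p'=\sigma(p)$ is separating too, and normalizing at both produces three pieces. The $\sigma$-stable ones map onto $T_1$ and $T_2$. Counting pieces forces the middle piece to be $\sigma$-stable and to map onto one $T_i$ while meeting $\pi^{-1}(t)$ twice. The outer pieces are then swapped by $\sigma$, so they must map to the same $T_j$, which is a contradiction.

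\textbf{Case (c): strongly non-separating.} Suppose $p$ lies in a separating pair, or is itself separating. Push the corresponding edge cut forward to $T$. The image $\pi(p)$ is a smooth point, and the cut is $\sigma$-invariant in the appropriate sense, which forces it to correspond to a node of $T$. I expect this step to be the main obstacle.

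The cleanest route I see is a counting argument with connected components. Let $\tilde C$ be the normalization at the cut $S$, and $S$ the set of nodes. If $S$ is $\sigma$-stable, then $\tilde C/\sigma$ is the normalization of $T$ at $\pi(S)$ minus smooth points, so it is connected at $\pi(p)$, since $\pi(p)$ is smooth on $T$. The branch swap at $p$ means $\sigma$ glues the two sides across $p$, contradicting disconnection. If $S$ is not $\sigma$-stable, apply the argument to $S\cup\sigma(S)$ and use the results of cases (a) and (b) to pin down the other node. For the statement about the two components through $p$: if they are distinct, $\sigma$ swaps them, so $Z\to\pi(Z)\cong\PP^1$ is an isomorphism and both are rational.

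\textbf{Uniqueness and the final claims.} In case (a), any other node $q$ forming a separating pair with $p$ must be $p'$. By the above, $q$ is also of type (a), and the cut $\{p,q\}$ maps to a single node of $T$, which forces $q=p'$. Semicompactness follows immediately. Finally, $T\simeq\PP^1$ if and only if $T$ has no nodes. By the local structure step, this holds if and only if no node of $C$ is of type (a) or (b), which by the equivalences is exactly $2$-inseparability.
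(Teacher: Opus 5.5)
Your overall architecture is sound. This includes the three forward implications with converses from exclusivity, the local computation of $\pi(p)$, and pulling back the separation of a node of $T$ in cases (a) and (b). The gap is that you never use that $(C,P)$ is \emph{stable}, and the three places where you claim a contradiction are exactly where stability is indispensable. For merely semistable curves the statement is false.

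\textbf{Counterexample in case (a).} Take a smooth hyperelliptic curve $B$ and attach rational tails $A$, $D$ at two conjugate non-Weierstrass points. Let $\sigma$ act as the hyperelliptic involution on $B$ and swap $A$ and $D$. The quotient is $\PP^1$ with one rational tail, and $\sigma$ is the identity on no component. Yet $A\cap B$ is a separating node not fixed by $\sigma$. This is precisely the configuration in your minimality argument for case (a): the middle piece maps onto $T_1$, the swapped outer pieces each map isomorphically onto $T_2$, and nothing is contradictory. The contradiction you assert would need $\pi^{-1}(T_2)$ to be connected, which your argument only gives in case (b), where $\sigma$ fixes the point over $t$.

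\textbf{Case (c).} Here $\sigma$ interchanging the two sides of the cut is perfectly compatible with $\tilde C$ disconnected and $\tilde C/\langle\sigma\rangle\simeq T$ connected. Two copies of $\PP^1$ meeting in one or two points and swapped by $\sigma$ is an example.

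\textbf{Uniqueness step.} You simply assert that the cut $\{p,q\}$ maps to a single node of $T$. This fails for a cycle of components $C_1C_2C_3C_4$ where $\sigma$ acts on $C_1$ and $C_3$ and swaps the rational bridges $C_2$ and $C_4$. There $C_1\cap C_2$ and $C_2\cap C_3$ form a separating pair with distinct images in $T$.

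\textbf{The missing ingredient} is Lemma~\ref{lem:orbit_decomposition_preserved}. Suppose $\sigma$ interchanges two distinct pieces $C_i$, $C_j$ of the decomposition induced by a $\sigma$-invariant set $S$ of nodes. Then $C_i$ carries no marked point and maps isomorphically into $T$, so it is a rational tree. Stability of $(C_i,P_i)$ (Lemma~\ref{lem:stable_decomposition}) then forces $|S_i|\ge 3$. In particular, if $|S|\le 2$, every piece is $\sigma$-stable. This closes all three steps:
\begin{enumerate}
\item In case (a), the swapped outer pieces would have $|S_i|=1$.
\item In case (c), the branch swap at $p$ interchanges the two sides of $\{p\}$, or of a $\sigma$-invariant pair $\{p,q\}$, which the lemma forbids. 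If instead $\sigma(q)\neq q$, then $\{p,q,\sigma(q)\}$ cuts $C$ into a cycle of three pieces, each containing two of these nodes. The two pieces meeting at $p$ are again swapped, a contradiction.
\item For the partner, suppose $q\notin\{p,p'\}$. Then $\{p,p',q,\sigma(q)\}$ cuts $C$ into a cycle of four pieces, all $\sigma$-stable by the lemma. This forces $\sigma$ to fix every connecting node, contradicting $\sigma(p)\neq p$.
\end{enumerate}
This is how the paper argues, using Lemma~\ref{lem:chains_of_cuts} for the cycle structure.
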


\begin{remark}
   The quotient morphism $\pi \colon C \to T$ is finite and, provided $\sigma$ is not globally the identity, forms a $2$-covering in the sense of Maugeais \cite[Definition 3.11]{maugeais2003relevement}. It has degree $2$ over every irreducible component where $\sigma$ does not act as the identity, and degree $1$ otherwise (which is only possible if $\Char k = 2$).
   
   In the tame setting ($\Char k \neq 2$), $\pi$ is an admissible cover (introduced by Harris and Mumford \cite{harris1982kodaira}; see \cite{abramovich2003twisted} for a modern treatment) if and only if $C$ contains no nodes of type \ref{item:properties_nodes_non}. Geometrically, such nodes arise when two Weierstrass points collide during the degeneration of a smooth hyperelliptic curve. To obtain an admissible cover in this scenario, one modifies the curve by inserting rational bridges at these nodes in $C$ and corresponding rational components at their images in $T$ (cf.\ \cite[Section 4]{harris1982kodaira} and \cite[Section 3.G]{harris1998moduli}). 
   
   In the wild setting ($\Char k = 2$), the classical theory of admissible covers does not apply. Instead, \cite[Définition 3.15]{maugeais2003relevement} provides the analogous notion of $2$-admissible covers. Upgrading $\pi$ to a $2$-admissible cover again requires inserting rational bridges at nodes of type \ref{item:properties_nodes_non}. These insertions are termed ``modifications'' in \cite[Definition 3.17 and Proposition 5.1]{maugeais2003relevement}. Additionally, to achieve a uniform degree of $2$ over components where $\pi$ has degree $1$, one composes the map with the relative Frobenius morphism to obtain a finite map $\pi' \colon C \to T'$ of degree $2$, which then is part of the data of the $2$-admissible cover (see \cite[Proposition 5.1]{maugeais2003relevement} and \cite[Appendix A.2]{yamaki2004cornalba}).
\end{remark}

To prove Proposition~\ref{prop:node_classification}, we first establish two structural lemmas regarding decompositions. The first of these clarifies the terminology ``respects the decomposition'' used in the preceding classification.

\begin{lemma} \label{lem:orbit_decomposition_preserved}
    Let $(C, P)$ be a hyperelliptic stable pointed curve with hyperelliptic involution $\sigma$. Let $S$ be a $\sigma$-invariant set of nodes, and let $\nu\colon \tilde{C} \to C$ denote the partial normalization of $C$ at $S$, with induced pointed decomposition $\{(C_1, P_1), \dots, (C_m, P_m)\}$. For each $i$, set $S_i := \nu^{-1}(S) \cap C_i$.
    \begin{enumerate}
        \item \label{item:orbit_decomposition_preserved_i} The involution $\sigma$ lifts to a unique involution on $\tilde{C}$, still denoted by $\sigma$, which permutes the components $C_1, \dots, C_m$. 
        \item \label{item:orbit_decomposition_preserved_ii} If $\sigma$ interchanges $C_i$ and $C_j$ for $i \neq j$, then $C_i$ is a rational tree and $|P_i| = |S_i| \geq 3$. Consequently, any component $C_i$ with $|S_i| \leq 2$ must be fixed by $\sigma$.
        \item \label{item:orbit_decomposition_preserved_iii} If $|S| \leq 2$, the involution $\sigma$ \emph{respects the decomposition}; that is, it fixes every component of the decomposition.
    \end{enumerate}
\end{lemma}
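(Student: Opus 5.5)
Part (i) comes from the universal property of normalization. Since $S$ is $\sigma$-invariant, the composite $\sigma\circ\nu\colon \tilde C \to C$ is a finite birational morphism from a curve that is smooth at all points over $S$. Away from $\nu^{-1}(S)$, the map $\nu$ is an isomorphism, so $\sigma\circ\nu$ factors uniquely through $\nu$ on that dense open set. The partial normalization at $S$ is the normalization of $C$ along $S$, and $\tilde C$ is reduced, so the factorization extends uniquely to a morphism $\tilde\sigma\colon \tilde C\to\tilde C$. Uniqueness then gives $\tilde\sigma^2=\id$, and $\tilde\sigma$ commutes with $\nu$. An automorphism permutes connected components, which proves (i). We also record two facts: $\tilde\sigma$ maps $\nu^{-1}(S)$ to itself, and it fixes the points of $P$. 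Hence $\tilde\sigma(P_i)=P_j$ whenever $\tilde\sigma(C_i)=C_j$.

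For (ii), suppose $\sigma$ swaps $C_i$ and $C_j$ with $i\neq j$.
\begin{itemize}
\item Since $\sigma$ fixes each point of $P$, any $p\in P\cap C_i$ would have to lie in $C_j$ as well, which is impossible. So $P\cap C_i=\emptyset$, and therefore $P_i=S_i$.
\item Consider the composite $C_i\hookrightarrow\tilde C\to C\to T$. Because $C_i$ and $\sigma(C_i)=C_j$ are distinct, the quotient map is generically injective on $\nu(C_i)$. So $\nu(C_i)\to T$ is birational onto a closed connected subcurve $T_i\subseteq T$, and $C_i\to T_i$ is finite and birational.
\item A closed connected subcurve of a rational tree is again a rational tree, so $T_i$ is one. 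It has genus $0$ and its components are smooth rational curves meeting in a tree configuration, so it has no singularities other than tree nodes.
\item From $0\le g(C_i)\le g(T_i)=0$ (the arithmetic genus cannot increase under a finite birational map onto a curve) we get $g(C_i)=0$. Being connected and semistable, $C_i$ is then a rational tree.
\end{itemize}
Stability of $(C_i,P_i)$ follows from Lemma~\ref{lem:stable_decomposition}. A stable pointed rational tree needs at least $3$ marked points, since a single $\PP^1$ needs $3$ and a tree with more components has at least two leaves, each carrying at least $2$ marked points. Hence $|S_i|=|P_i|\ge3$. The "consequently" clause is the contrapositive.

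For (iii), if $|S|\le2$ then $|\nu^{-1}(S)|\le4$, and these points are distributed among the components. If $\sigma$ swapped $C_i$ and $C_j$, then (ii) would give $|S_i|\ge3$ and $|S_j|\ge3$, so $|\nu^{-1}(S)|\ge6$. This is a contradiction, so every component is fixed by $\sigma$.

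The step I expect to be delicate is the genus argument in (ii). One must check that $\nu(C_i)\to T$ really is birational onto its image even though $\nu(C_i)$ and $\nu(C_j)$ may share nodes in $S$. This works because the quotient map identifies only the $\sigma$-orbits, and these are generically pairs with one point in each of the two subcurves.
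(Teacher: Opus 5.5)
Your proof is correct and follows the paper's argument step by step. The only variation is in (ii): you bound $g(C_i)$ via the finite birational map $C_i \to T_i$, whereas the paper identifies $C_i$ with the component $(C_i \sqcup C_j)/\langle\sigma\rangle$ of $\tilde{C}/\langle\sigma\rangle$. One small tightening is needed there, because generic injectivity alone does not give birationality in characteristic $2$. Birationality does hold, since $\sigma$ swaps the disjoint open sets $\nu(C_i)\setminus S$ and $\nu(C_j)\setminus S$, so the quotient of their union is isomorphic to either piece.
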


\begin{proof}
    \ref{item:orbit_decomposition_preserved_i} By the universal property of normalization, the involution $\sigma$ lifts to a unique involution on $\tilde{C}$ that permutes its connected components.
    
    \ref{item:orbit_decomposition_preserved_ii} Suppose $\sigma(C_i) = C_j$ for $i \neq j$. Since $\sigma$ fixes $P$ pointwise, we must have $P \cap C_i = \emptyset$, and hence $P_i = S_i$. Furthermore, the quotient $\tilde{C}/\langle\sigma\rangle$ maps via normalization to the rational tree $C/\langle\sigma\rangle$. Because $\sigma$ swaps the disjoint components $C_i$ and $C_j$, the quotient map restricts to an isomorphism on $C_i$, meaning each component $C_i$ is a rational tree. Since the components of a decomposed stable curve remain stable, the stability of $(C_i, P_i)$ requires that $|S_i| = |P_i| \geq 3$.
    
    \ref{item:orbit_decomposition_preserved_iii} If $|S| \leq 2$, then $\sigma$ interchanging $C_i$ and $C_j$ would require $|S_i| + |S_j| \geq 6$. This contradicts the fact that each node in $S$ contributes exactly two preimages, meaning $\sum_l |S_l| = 2|S| \leq 4$. Thus, $\sigma$ must fix every component.
\end{proof}

\begin{lemma} \label{lem:chains_of_cuts}
    Let $S = \{p_1, \dots, p_n\}$ be distinct nodes of a semistable curve $C$, inducing a decomposition $\{(C_1, S_1), \dots, (C_m, S_m)\}$. In both cases below, the two preimages of each $p \in S$ lie on distinct components. This allows us to naturally identify each $C_i$ with a subcurve of $C$, and each $S_i$ with a subset of $S$.
    \begin{enumerate}
        \item \label{item:chains_of_cuts_i} If every $p \in S$ is a separating node, then $m = n+1$ and the components form a tree (i.e., any two components are connected by a unique chain of nodes in $S$).
        \item \label{item:chains_of_cuts_ii} If $n \geq 2$ and $\{p_i, p_{i+1}\}$ is a separating pair for all $i < n$, then every pair of distinct nodes $\{p_i, p_j\}$ in $S$ is separating. In this case, $m = n$ and the components form a cycle (understood as two parallel edges if $n=2$). Up to relabeling, $p_i$ connects $C_i$ to $C_{i+1}$, and $S_i = \{p_{i-1}, p_i\}$ (with indices taken modulo $n$).
    \end{enumerate}
\end{lemma}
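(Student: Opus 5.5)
The whole argument takes place in the dual graph. Let $G$ be the dual graph of $C$, with edge set the nodes and vertex set the irreducible components. For a set of nodes $S$, the components of the decomposition are the connected components of $G \setminus S$. A node $p$ lies on a single component $C_i$ exactly when its edge, after deleting the other edges of $S$, has both endpoints in the same connected component; otherwise its two branches lie on distinct components. So each claim becomes a statement about edge cuts in a connected multigraph. Throughout I use that $C$ is connected and that a bridge (respectively a minimal $2$-edge-cut) has exactly two sides.

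For \ref{item:chains_of_cuts_i} I induct on $n$. The case $n=1$ is the definition of a separating node: deleting a bridge leaves two connected pieces, and the edge joins them. For the inductive step, assume $G \setminus \{p_1,\dots,p_{n-1}\}$ has $n$ components forming a tree, with the contracted graph having these components as vertices and the edges $p_i$ as edges. The edge $p_n$ lies inside one component $D$. It is still a bridge of $D$, since any path in $D$ avoiding $p_n$ would also exist in $G$. Deleting it splits $D$ into two pieces joined only by $p_n$. Each remaining edge $p_i$ attached to $D$ now attaches to exactly one of these pieces, so the contracted graph stays a tree with $n+1$ vertices. This also shows that the two preimages of each $p_i$ lie on distinct components. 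Uniqueness of the chain of nodes between two components is then just uniqueness of paths in a tree.

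For \ref{item:chains_of_cuts_ii} the key step is the case $n=2$, which is basic. If $\{p,q\}$ is a minimal $2$-edge-cut, then $G \setminus \{p,q\}$ has exactly two components $A$ and $B$, and each of $p$ and $q$ joins $A$ to $B$. Indeed, if, say, $p$ had both ends in $A$, then $q$ alone would already disconnect $G$, contradicting minimality. For $n\ge 3$ I would first show that $p_1,\dots,p_n$ are all non-separating and that the relation "$p=q$ or $\{p,q\}$ is a separating pair" is transitive. Let $\{p,q\}$ and $\{q,r\}$ be separating pairs with $p \neq r$. Delete $q$; in the resulting graph $G'$, both $p$ and $r$ are bridges. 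By \ref{item:chains_of_cuts_i} applied to $G'$, deleting both leaves three components in a path, so deleting $p$ and $r$ from $G$ leaves components connected only through $q$. This gives either two components or a disconnection. Since neither $p$ nor $r$ is a bridge of $G$, the pair $\{p,r\}$ is a minimal $2$-cut. By transitivity, all pairs in $S$ are separating pairs.

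It remains to get the cycle structure. In the graph $G'$ obtained by deleting $p_n$, all of $p_1,\dots,p_{n-1}$ are bridges. By \ref{item:chains_of_cuts_i}, deleting them gives $n$ components arranged in a tree, and $p_n$ must then reconnect this tree so that no single $p_i$ disconnects. I expect this to be the main obstacle: showing that the tree is a path and that $p_n$ joins its two ends. Here is how I would do it. Deleting any $p_i$ together with $p_n$ must disconnect $G$, so $p_n$ must lie on the unique tree-path through every edge $p_i$. A tree in which a single path contains every edge is itself a path. Moreover, for the removal of any single $p_i$ to leave $G$ connected, $p_n$ must join the two endpoints of that path. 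Relabeling along the path then gives exactly $n$ components with $p_i$ joining $C_i$ and $C_{i+1}$ cyclically, and $S_i=\{p_{i-1},p_i\}$. The two preimages of each node lie on distinct components because $n\ge 2$. When $n=2$ the picture degenerates to two parallel edges, matching the basic case above.
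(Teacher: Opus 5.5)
Your argument is correct, but for part (ii) it takes a different route from the paper. The paper works directly in the contracted graph $\Gamma=\Graph^S(C,\emptyset)$. For (i) it just observes that a connected graph in which every edge is a bridge is a tree; your induction reproves this by hand. For (ii) it uses that the symmetric difference of two edge-cuts is again an edge-cut. The cuts $\{p_i,p_{i+1}\}$ and $\{p_{i+1},p_{i+2}\}$ therefore produce the cut $\{p_i,p_{i+2}\}$, which is minimal since $\Gamma$ has no bridges. The passage from ``every pair is a minimal $2$-cut'' to ``every vertex has degree $2$'' is then delegated to the author's Master's thesis.

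You instead reduce (ii) to (i) twice. Deleting one node of a separating pair turns its partner into a bridge, which gives transitivity of the separating-pair relation. Deleting $p_n$ turns $p_1,\dots,p_{n-1}$ into bridges, so $\Gamma\setminus p_n$ is a spanning tree on $m=n$ vertices, and $\Gamma$ is the fundamental cycle of $p_n$. The paper's cut-space argument is quicker for the ``every pair'' claim. Yours is self-contained and yields $m=n$ and the cyclic labelling in one stroke.

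One justification in your last step should be repaired. You say that deleting $p_i$ together with $p_n$ must disconnect $G$, and conclude that the tree path between the ends of $p_n$ passes through every $p_i$. That condition holds automatically, since it only says that $p_i$ is a bridge of $G\setminus p_n$, so it carries no information. The correct reason is that $G\setminus p_i$ is connected. This means $p_i$ lies on the unique cycle that $p_n$ closes up in the tree, which is precisely the tree path between the ends of $p_n$. It is the fact you invoke one sentence later. Once this is in place, $p_n$ joining the two ends of the path is immediate, and it also excludes $p_n$ being a loop of $\Gamma$ because $n\ge 2$.

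Similarly, in your opening paragraph a node lies on a single component when both ends of its edge lie in the same connected component of $G\setminus S$. Here all of $S$ is deleted, not just the other edges.
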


\begin{proof}
    We translate the problem to the associated graph $\Gamma := \Graph^S(C,\emptyset)$, whose vertices and edges correspond to the components $C_i$ and the nodes in $S$, respectively. Since $C$ is connected, $\Gamma$ is connected. Observe that the two preimages of a node in $S$ lie on distinct components if and only if the corresponding edge in $\Gamma$ is not a loop.
    
    \ref{item:chains_of_cuts_i} The assumption means every edge in $\Gamma$ is a bridge. A connected graph in which every edge is a bridge is a tree. Therefore, $\Gamma$ has no loops, possesses $n$ edges, and has $m = n+1$ vertices, matching the claimed tree structure.
    
    \ref{item:chains_of_cuts_ii} The assumption translates to the pairs of edges $\{p_i, p_{i+1}\}$ forming minimal $2$-edge-cuts in $\Gamma$. Because every edge $p_i$ belongs to such a cut, $\Gamma$ contains no bridges or loops and is therefore $2$-edge-connected. Recall that the symmetric difference of two edge-cuts is again an edge-cut. Since $\Gamma$ has no bridges, any such $2$-edge-cut is automatically minimal. Applying this iteratively shows that \emph{any} pair of distinct edges $\{p_i, p_j\}$ forms a minimal $2$-edge-cut. This strict connectivity condition forces every vertex in $\Gamma$ to have degree exactly $2$ (we refer to \cite[Lemma 2.21]{MaxMaster} for the detailed contradiction argument proving this degree bound). A connected graph where every vertex has degree $2$ is an $n$-cycle, yielding $m=n$ components arranged cyclically with no loops.
\end{proof}

\begin{proof}[Proof of Proposition \ref{prop:node_classification}]
    Let $p$ be a node of $C$. 

    (\ref{item:properties_nodes_sep}) Assume $p$ is a separating node. If $p \neq \sigma(p)$, the set $S=\{p, \sigma(p)\}$ induces a chain of three components (Lemma~\ref{lem:chains_of_cuts}~\ref{item:chains_of_cuts_i}). The involution $\sigma$ must swap the two outer components, which contradicts Lemma~\ref{lem:orbit_decomposition_preserved}~\ref{item:orbit_decomposition_preserved_iii} stating that $\sigma$ fixes all components when $|S|=2$. Thus, $p = \sigma(p)$. Since $\sigma$ fixes $p$ and the adjacent components, it preserves the branches. Conversely, if $\sigma$ fixes $p$ and its branches, $\pi(p)$ is a node in the quotient tree $T$. Since any node in a tree is separating and the fiber over $\pi(p)$ is exactly $\{p\}$, the partial normalization of $C$ at $p$ disconnects the curve.

    (\ref{item:properties_nodes_pair}) Assume $p \neq \sigma(p)$. By (\ref{item:properties_nodes_sep}), neither node is a separating node. The fiber over $\pi(p)$ is exactly $\{p, \sigma(p)\}$, so $\pi(p)$ is a node of $T$. The partial normalization of $T$ at $\pi(p)$ disconnects the tree, meaning $\{p, \sigma(p)\}$ forms a separating pair in $C$. 
    Conversely, let $\{p, q\}$ be a separating pair. Consider $S = \{p, q, \sigma(p), \sigma(q)\}$. Since $\{p, q\}$ is a separating pair, so is its image $\{\sigma(p), \sigma(q)\}$. Furthermore, as established above, if $p \neq \sigma(p)$, then $\{p, \sigma(p)\}$ is a separating pair (and likewise for $q$ if $q \neq \sigma(q)$). Thus, the assumptions of Lemma~\ref{lem:chains_of_cuts}~\ref{item:chains_of_cuts_ii} are satisfied. If $|S| > 2$, the lemma dictates that $S$ induces a cycle of $m \ge 3$ components. Because each component $C_i$ in this cycle contains exactly two nodes from $S$ (i.e., $|S_i| = 2$), Lemma~\ref{lem:orbit_decomposition_preserved}~\ref{item:orbit_decomposition_preserved_ii} dictates that $\sigma$ must fix these components. This rigidly forces $\sigma$ to fix the connecting nodes individually, contradicting $|S|>2$. Thus, $S=\{p,q\}$. If $\sigma(p)=p$, then $\sigma$ must interchange its branches (since $p$ is non-separating by definition). This would swap the two components connected by $S$, contradicting Lemma~\ref{lem:orbit_decomposition_preserved}~\ref{item:orbit_decomposition_preserved_iii}. Therefore, we must have $\sigma(p)=q$, and by the same lemma, $\sigma$ respects the induced decomposition.

    (\ref{item:properties_nodes_non}) By the previous two items, $p$ is strongly non-separating if and only if it is fixed by $\sigma$ but does not preserve its branches (i.e., $\sigma$ interchanges them). Consequently, the quotient map locally folds these branches together, making $\pi(p)$ a smooth point of $T$. If $p$ lies on two distinct irreducible components $Z_1$ and $Z_2$, interchanging the branches means $\sigma(Z_1) = Z_2$. The quotient map restricts to an isomorphism from $Z_1$ to an irreducible subcurve of the rational tree $T$, forcing both $Z_1$ and $Z_2$ to be rational.

    Finally, we deduce the global properties of $C$. Because any node in a separating pair is swapped with its unique partner $\sigma(p)$, no node can belong to more than one separating pair, meaning $C$ is always of semicompact type. Furthermore, $C$ is $2$-inseparable if and only if it has no nodes of type (\ref{item:properties_nodes_sep}) or (\ref{item:properties_nodes_pair}). By our classification, this occurs if and only if $T$ has no nodes, which is equivalent to the rational tree $T$ being irreducible ($T \simeq \PP^1_k$).
\end{proof}

\section{Decomposition and Uniqueness} \label{sec:uniqueness}

The primary objective of this section is to prove the uniqueness of the hyperelliptic involution on a stable curve. We proceed by induction on the number of separating nodes and separating pairs, effectively reducing the problem to the $2$-inseparable base case. A proof of uniqueness for unpointed curves over $\CC$ appears in \cite[Chapter X, Lemma 3.5]{ACGII}, proceeding by induction on the number of irreducible components of the quotient tree $T$. Our reformulation, which instead inducts directly on the separating nodes and pairs of $C$, offers a distinct advantage: it allows one to systematically deduce whether a given combinatorial type is potentially hyperelliptic without requiring \emph{a priori} knowledge of the quotient map to $T$.

\begin{lemma}[Decomposition at a Separating Node] \label{lem:decomp_sep_node}
    Let $p$ be a separating node of a stable pointed curve $(C, P)$, and let $\{(C_1, P_1), (C_2, P_2)\}$ be the pointed decomposition of $(C, P)$ induced by $p$. Then $(C, P)$ is hyperelliptic if and only if both $(C_1, P_1)$ and $(C_2, P_2)$ are hyperelliptic.

    Furthermore, hyperelliptic involutions on $(C, P)$ correspond bijectively to pairs of hyperelliptic involutions on $(C_1, P_1)$ and $(C_2, P_2)$ via restriction and gluing.
\end{lemma}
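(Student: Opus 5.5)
We prove both directions by restricting and gluing involutions, and we track the quotients explicitly. Throughout, let $p_1 \in C_1$ and $p_2 \in C_2$ denote the two preimages of $p$. These are marked points of $(C_1,P_1)$ and $(C_2,P_2)$, and $C$ is obtained from $C_1 \sqcup C_2$ by identifying $p_1$ with $p_2$.

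\emph{Forward direction.} Let $\sigma$ be a hyperelliptic involution on $(C,P)$. By Proposition~\ref{prop:node_classification}~(\ref{item:properties_nodes_sep}), $\sigma$ fixes $p$ and preserves its branches. By Lemma~\ref{lem:orbit_decomposition_preserved} with $S=\{p\}$, the lift of $\sigma$ to $\tilde C = C_1 \sqcup C_2$ fixes each $C_i$. Because the branches are preserved, the restriction $\sigma_i := \sigma|_{C_i}$ fixes $p_i$. It also fixes $P \cap C_i$ pointwise, so $\sigma_i \in \Aut_k(C_i,P_i)$.

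Next we compare quotients. Since $\pi(p)$ is a node of $T$ with fiber $\{p\}$, partially normalizing $T$ at $\pi(p)$ gives $T_1 \sqcup T_2$ with $T_i = C_i/\langle\sigma_i\rangle$. Formation of the quotient commutes with this normalization: it is a local computation at the invariant node, whose branches are preserved. A connected subcurve of a rational tree obtained by cutting at a node is again a rational tree, so each $T_i$ is a rational tree. Condition~\ref{def:hyperelliptic_ii} passes to $\sigma_i$ because the irreducible components of $C_i$ are irreducible components of $C$. Hence each $\sigma_i$ is a hyperelliptic involution.

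\emph{Converse.} Let $\sigma_1,\sigma_2$ be hyperelliptic involutions on $(C_1,P_1)$ and $(C_2,P_2)$. Each fixes $p_i$, since $p_i$ is a marked point. Therefore $\sigma_1 \sqcup \sigma_2$ respects the identification $p_1 \sim p_2$ and descends to an involution $\sigma$ of $C$ fixing $P$ and preserving both branches at $p$. The main point is to identify $C/\langle\sigma\rangle$ with $T_1 \cup T_2$ glued at the images of $p_1$ and $p_2$. This gluing is a rational tree: it is connected, and its genus is $0+0+h^1 = 0$ since one node joins two trees.

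To justify the identification, use the universal property of the pushout $C = C_1 \sqcup_{p} C_2$. Both invariant functions and quotients can be computed on an affine $\sigma$-invariant neighborhood of $p$. There the ring of $\sigma$-invariants of the fiber product $\mathcal O_{C_1}\times_k \mathcal O_{C_2}$ is $\mathcal O_{C_1}^{\sigma_1}\times_k \mathcal O_{C_2}^{\sigma_2}$, because taking invariants is left exact. This local computation is where I expect the only real care to be needed, especially in characteristic $2$ where invariants are not a direct summand. Left exactness of invariants under a fiber product suffices, however, so no averaging is needed. Condition~\ref{def:hyperelliptic_ii} holds for $\sigma$ componentwise. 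Thus $\sigma$ is hyperelliptic, and the stability of $(C,P)$ is already given.

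\emph{Bijection.} The constructions are mutually inverse. Restricting a glued involution recovers $\sigma_1$ and $\sigma_2$. Conversely, gluing the restrictions of $\sigma$ recovers $\sigma$, since an automorphism of $C$ is determined by its lift to the normalization $\tilde C$. Hence restriction and gluing give the claimed bijection between hyperelliptic involutions on $(C,P)$ and pairs of hyperelliptic involutions on $(C_1,P_1)$ and $(C_2,P_2)$.
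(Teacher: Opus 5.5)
Your proof is correct and follows essentially the same route as the paper: restrict $\sigma$ using that it fixes the separating node with preserved branches, glue $\sigma_1,\sigma_2$ along the marked points $p_1,p_2$, and observe that the quotient is the two rational trees joined at a single node. Your explicit identification $\mathcal O_{C}^{\sigma} = \mathcal O_{C_1}^{\sigma_1}\times_k \mathcal O_{C_2}^{\sigma_2}$ via left exactness of invariants is a welcome justification, valid in characteristic $2$ as well, of a step the paper leaves implicit.
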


\begin{proof}
    For $\Char k \neq 2$, the condition that an involution does not act as the identity on any irreducible component is preserved under restriction and gluing. Thus, we need only verify that the respective quotients are rational trees.

    If $\sigma$ is a hyperelliptic involution on $(C, P)$, it fixes the separating node $p$ by Proposition~\ref{prop:node_classification} and restricts to involutions $\sigma_i$ on $(C_i, P_i)$. Since the quotients $C_i/\langle\sigma_i\rangle$ are subcurves of the rational tree $C/\langle\sigma\rangle$, they are inherently rational trees, making the $\sigma_i$ hyperelliptic.

    Conversely, given hyperelliptic involutions $\sigma_i$ on $(C_i, P_i)$, they both fix the marked point $p \in P_i$ by definition, and therefore glue uniquely to an involution $\sigma$ on $(C, P)$. Its quotient $C/\langle\sigma\rangle$ joins the rational trees $C_i/\langle\sigma_i\rangle$ at a single node, forming a rational tree, whence $\sigma$ is hyperelliptic.
\end{proof}

To handle separating pairs, we formally construct the contracted components of the decomposition.

\begin{definition} \label{def:contracted_components}
    Let $S=\{p, q\}$ be a separating pair of a stable pointed curve $(C, P)$, inducing the pointed decomposition $\{(C_1, P_1), (C_2, P_2)\}$, and let $S_i \subset C_i$ denote the preimages of $S$. The \emph{contracted component} $(C_i', P_i')$ is the stable pointed curve obtained from $C_i$ by identifying the two points in $S_i$ to form a new node $r_i$, and defining the marked points as $P_i' := P_i \setminus S_i$.
\end{definition}

In other words, $(C_i, P_i)$ is precisely the pointed decomposition of $(C_i', P_i')$ induced by the non-separating node $r_i$. Consequently, the stability of the contracted component $(C_i', P_i')$ follows immediately from Lemma~\ref{lem:stable_decomposition}. Furthermore, if the original curve $C$ is of semicompact type, then $r_i$ is a strongly non-separating node of $C_i'$. Indeed, if $r_i$ were part of a separating pair in $C_i'$, the corresponding node in $C$ would form a separating pair with both $p$ and $q$, which is impossible since $C$ is of semicompact type.

\begin{lemma}[Decomposition at a Separating Pair] \label{lem:decomp_sep_pair}
    Let $S = \{p, q\}$ be a separating pair of a stable pointed curve $(C, P)$ of semicompact type, and let $\{(C_1, P_1), (C_2, P_2)\}$ be the pointed decomposition of $(C, P)$ induced by $S$. Then $(C, P)$ is hyperelliptic if and only if both contracted components $(C_1', P_1')$ and $(C_2', P_2')$ are hyperelliptic.

    Furthermore, hyperelliptic involutions on $(C, P)$ correspond bijectively to pairs of hyperelliptic involutions on the contracted components $(C_1', P_1')$ and $(C_2', P_2')$.
\end{lemma}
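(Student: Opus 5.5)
We prove both directions by transporting involutions between $C$ and the contracted components, in the same way as Lemma~\ref{lem:decomp_sep_node}, and we track the quotient trees.

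\textbf{Forward direction.} Let $\sigma$ be a hyperelliptic involution on $(C,P)$. By Proposition~\ref{prop:node_classification}~(\ref{item:properties_nodes_pair}), $\sigma$ interchanges $p$ and $q$. By Lemma~\ref{lem:orbit_decomposition_preserved}~\ref{item:orbit_decomposition_preserved_iii}, it lifts to involutions $\sigma_i$ of $C_i$, each fixing $C_i$. Since $\sigma$ swaps $p$ and $q$, each $\sigma_i$ swaps the two points of $S_i$. Hence $\sigma_i$ descends to an involution $\sigma_i'$ of $C_i'$ that fixes $r_i$ and interchanges its two branches, and it fixes $P_i'$ pointwise. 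Condition~\ref{def:hyperelliptic_ii} is inherited because the irreducible components of $C_i'$ are those of $C_i$.

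For the quotient, $C_i/\langle\sigma_i\rangle$ is a connected subcurve of $\tilde C/\langle\sigma\rangle$. The latter is the partial normalization of $T$ at the single node $\pi(p)=\pi(q)$, so it is a disjoint union of two rational trees, and $C_i/\langle\sigma_i\rangle$ is one of them. Passing to $C_i'$ identifies the two points of $S_i$, which lie in one $\sigma_i$-orbit and so have the same image. Therefore $C_i'/\langle\sigma_i'\rangle \cong C_i/\langle\sigma_i\rangle$, and the image of $r_i$ is a smooth point. To justify this, I would use the local model: a node whose branches are swapped has quotient $k[[x,y]]/(xy)$ modulo $x\leftrightarrow y$. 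Its invariants are generated by $x+y$ and $xy=0$, so the quotient is smooth. This requires some care in characteristic $2$, but the invariant computation is characteristic-free. So $\sigma_i'$ is hyperelliptic.

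\textbf{Converse direction.} Let $\sigma_i'$ be hyperelliptic on $(C_i',P_i')$. As noted after Definition~\ref{def:contracted_components}, semicompactness makes $r_i$ strongly non-separating in $C_i'$. By Proposition~\ref{prop:node_classification}~(\ref{item:properties_nodes_non}), $\sigma_i'$ fixes $r_i$ and swaps its branches. Its lift $\sigma_i$ to $C_i$ therefore interchanges the two points of $S_i$.

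Write $S_1=\{p_1,q_1\}$ and $S_2=\{p_2,q_2\}$ for the preimages of $p$ and $q$. Then $\sigma_1(p_1)=q_1$ and $\sigma_2(p_2)=q_2$. The disjoint union $\sigma_1\sqcup\sigma_2$ on $\tilde C$ is compatible with the gluing $p_1\sim p_2$, $q_1\sim q_2$, so it descends to an involution $\sigma$ of $C$ that swaps $p$ and $q$.

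The quotient $C/\langle\sigma\rangle$ is obtained from the two trees $C_i/\langle\sigma_i\rangle\cong C_i'/\langle\sigma_i'\rangle$ by gluing the image of $p_1$ (equal to that of $q_1$) to the image of $p_2$, which is one gluing. Hence it is a rational tree of genus $0$, and $\sigma$ is hyperelliptic. Condition~\ref{def:hyperelliptic_ii} again transfers componentwise.

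\textbf{Bijection and main obstacle.} The two constructions are mutually inverse, since lifts to normalizations are unique (Lemma~\ref{lem:orbit_decomposition_preserved}~\ref{item:orbit_decomposition_preserved_i}) and descents along the gluing are unique. This gives the bijective correspondence.

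I expect the main obstacle to be the quotient computation: showing rigorously that forming quotients commutes with partial normalization and gluing at a free orbit $\{p,q\}$, and with pinching the orbit $S_i$ to the node $r_i$. Concretely, one must check that $\pi$ is étale-locally a disjoint union near $p$ and $q$, so that the quotient of the glued curve is the glued quotient, and that the quotient near $r_i$ is smooth. I would verify both facts on completed local rings, taking invariants of a finite group acting on a finite algebra. This is compatible with gluing because the relevant orbits are free (at $p,q$) or are a single point whose branches are exchanged (at $r_i$).
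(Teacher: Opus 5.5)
Your proposal is correct and follows the paper's proof essentially step for step. In the forward direction you use Proposition~\ref{prop:node_classification} to see that $\sigma$ swaps $p$ and $q$ and fixes each $C_i$, then descend to $C_i'$; in the converse you use that semicompactness makes $r_i$ strongly non-separating, then lift, glue, and identify the quotient as two rational trees joined at one smooth point each. Your local computations (the invariant ring $k[[x+y]]$ at a node with exchanged branches, and the \'etale-local triviality of the quotient at the free orbit $\{p,q\}$) make explicit what the paper leaves implicit.
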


\begin{proof}
    For $\Char k \neq 2$, the condition that an involution does not act as the identity on any irreducible component is preserved under contraction and lifting. Thus, we need only verify that the respective quotients are rational trees.
    
    If $(C,P)$ is hyperelliptic with involution $\sigma$, Proposition~\ref{prop:node_classification} implies that $\sigma$ swaps $p$ and $q$ and respects the decomposition, fixing the components $C_i$. The restriction $\sigma_i$ swaps the points in $S_i$ and therefore descends to an involution $\sigma_i'$ on $C_i'$ fixing the new node $r_i$. The quotient $C_i'/\langle\sigma_i'\rangle \simeq C_i/\langle\sigma_i\rangle$ is a subcurve of the rational tree $C/\langle\sigma\rangle$, making it a rational tree.
    
    Conversely, hyperelliptic involutions $\sigma_i'$ on $C_i'$ must fix the strongly non-separating node $r_i$ and interchange its branches. Lifting $\sigma_i'$ to the partial normalization $C_i$ yields an involution $\sigma_i$ swapping the preimages $S_i$. Gluing these components along $S_i$ produces a global involution $\sigma$ on $(C, P)$ that swaps $p$ and $q$. The quotient $C/\langle\sigma\rangle$ is obtained by gluing two rational trees at a single smooth point from each, resulting in a rational tree, whence $\sigma$ is hyperelliptic.
\end{proof}

We now address the $2$-inseparable base case, demonstrating that such a curve admits at most one hyperelliptic involution.

\begin{lemma} \label{lem:unique_2_inse}
    Let $(C, P)$ be a $2$-inseparable hyperelliptic stable pointed curve. Then $C$ must be one of the following two types:
    \begin{enumerate}
        \item $C$ is irreducible; or
        \item $C$ is a \emph{binary curve}: the union of two smooth rational components intersecting at $g(C)+1 \ge 3$ nodes.
    \end{enumerate}
    In either case, the hyperelliptic involution on $(C, P)$ is unique.
\end{lemma}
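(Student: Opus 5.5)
The plan is to use Proposition~\ref{prop:node_classification}. Since $C$ is $2$-inseparable, every node is of type $\Xi_{\mathrm{irr}}$: it is fixed by $\sigma$, $\sigma$ interchanges its branches, and its image is a smooth point of $T$. The proposition also gives $T \simeq \PP^1_k$.

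\textbf{Structure.} I will look at the irreducible components of $C$ and their images in $T$. Each component maps onto the irreducible curve $T$, so either $\pi$ has degree $2$ on it, or it maps isomorphically to $T$. In the degree-$1$ case two things can happen. The component may be swapped with another component $Z'$, which is then also rational. Or the component may be pointwise fixed, which is only possible in characteristic $2$.
\begin{enumerate}
    \item If some component $Z$ has degree $2$ onto $T$, it is the full preimage of $T$. So $C = Z$ is irreducible.
    \item Otherwise $C$ has total degree $2$ over $T$. The fibre over a general point has two points, so $C$ has either exactly two components swapped by $\sigma$, or two pointwise fixed components. The second option is impossible. At a node joining two components, $\sigma$ interchanges the branches and therefore swaps the components. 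A pointwise fixed component could not do this, so a node between two fixed components cannot exist.
\end{enumerate}
In case (ii), $C = Z_1 \cup Z_2$ with $\sigma(Z_1) = Z_2$ and $Z_i \simeq T \simeq \PP^1$. Both components are smooth: a self-node of $Z_1$ would lie on one component only, yet $\sigma$ would have to map its branches into $Z_2$. The genus formula then gives $g(C) = \#\text{nodes} - 1$. Finally, $2$-inseparability (no separating pairs, no separating node) forces at least $3$ nodes, so $g(C)+1 \geq 3$.

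\textbf{Uniqueness, binary case.} Since $g(C) \geq 2$, each $Z_i$ has at least $3$ nodes. A hyperelliptic involution must swap $Z_1$ and $Z_2$, because every node is of type $\Xi_{\mathrm{irr}}$. It must also fix every node. So $\sigma|_{Z_1}\colon Z_1 \to Z_2$ is an isomorphism sending each node $n \in Z_1$ to the same node $n \in Z_2$. An isomorphism $\PP^1 \to \PP^1$ is determined by where it sends $3$ points, so $\sigma|_{Z_1}$ is determined. Since $\sigma$ is an involution, $\sigma|_{Z_2} = (\sigma|_{Z_1})^{-1}$ is determined as well.

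\textbf{Uniqueness, irreducible case.} This is the main obstacle. Let $\tilde C$ be the normalization, with $\sigma$ lifted to $\tilde C$. The plan is to descend to $\tilde C$ and then rule out the awkward sub-cases.
\begin{itemize}
    \item If $\sigma$ is the identity, the quotient $T = C$ would have to be a rational tree, but $g(C) \geq 2$. So this cannot happen, even in characteristic $2$.
    \item So $\sigma$ has degree $2$ and $\tilde C/\langle\sigma\rangle$ is a smooth rational curve (the normalization of $T \simeq \PP^1$, hence $\PP^1$). Thus $\tilde\sigma$ is a hyperelliptic-type involution on $\tilde C$, and it swaps the two preimages of each node.
    \item If $g(\tilde C) \geq 2$, then $\tilde C$ is a smooth hyperelliptic curve, and its $g^1_2$ is unique by the classical result. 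Hence $\tilde\sigma$, and so $\sigma$, is unique.
    \item If $g(\tilde C) \leq 1$, the involution is fixed by the node data. Any involution of $\tilde C$ commuting with the pairing of node preimages and having quotient $\PP^1$ has the node pairs among its fibres. We have $\#\text{nodes} = g - g(\tilde C) \geq 1$.
        \begin{itemize}
            \item When $g(\tilde C) = 0$, there are at least $2$ nodes. A degree-$2$ map $\PP^1 \to \PP^1$ is determined up to the target automorphism by two of its fibres, so the involution is unique.
            \item When $g(\tilde C) = 1$, there is at least $1$ node. An involution of an elliptic curve with quotient $\PP^1$ has the form $x \mapsto c - x$. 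The fibre $\{a, b\}$ of a node determines $c = a + b$, so the involution is unique.
        \end{itemize}
\end{itemize}
In the pointed setting, marked points only shrink the set of admissible involutions, so uniqueness carries over.
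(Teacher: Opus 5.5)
Your structural dichotomy and the binary case are fine and essentially match the paper. The irreducible case, however, has a genuine gap: you argue as if $C$ were an unpointed stable curve with $g(C)\geq 2$, but the lemma concerns stable \emph{pointed} curves, where $g(C)\in\{0,1\}$ is allowed. Three of your assertions fail there:
\begin{enumerate}
\item ``$\sigma=\id$ is impossible because $g(C)\geq 2$, even in characteristic $2$'';
\item ``$\#\text{nodes}=g-g(\tilde C)\geq 1$'';
\item ``if $g(\tilde C)=0$ there are at least $2$ nodes''.
\end{enumerate}
Concrete counterexamples are an elliptic tail $(E,p)$, which has no nodes, and a rational curve with one self-node and one marked point, which has a single node. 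Another is $\PP^1$ with three marked points: it has no nodes, and in characteristic $2$ its hyperelliptic involution \emph{is} $\id$ (it is the central component in Example~\ref{ex:char2_exclusive}). These are exactly the kind of $2$-inseparable components produced by the decomposition in Theorem~\ref{thm:uniqueness_hyperelliptic}, so they are not negligible edge cases.

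Your closing sentence does not repair this. ``Marked points only shrink the set of admissible involutions'' presupposes uniqueness for the underlying unpointed curve. But $C$ need not be stable without its markings, and then uniqueness genuinely fails. On $E$, every $x\mapsto c-x$ has quotient $\PP^1$. On $\tilde C\simeq\PP^1$ with a single node pair $\{a,b\}$, there is a one-parameter family of involutions swapping $a$ and $b$.

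The fix, as in the paper, is to use the marked points as fixed points from the outset. The lift $\tilde\sigma$ of any hyperelliptic involution fixes $\nu^{-1}(P)$ pointwise and swaps the two preimages of each node.
\begin{enumerate}
\item If $g(\tilde C)=0$, stability provides at least three special points on $\tilde C\simeq\PP^1$. All hyperelliptic involutions act on these in the same prescribed way, so $\tilde\sigma$ is determined. This also covers $\tilde\sigma=\id$ in characteristic $2$.
\item If $g(\tilde C)=1$, then $\tilde\sigma$ has the form $x\mapsto c-x$. A fixed marked point $p$ forces $c=2p$. If there are no marked points, there is a node, and your argument $c=a+b$ applies.
\end{enumerate}
For $g(\tilde C)\geq 2$ your classical argument is fine as it stands.
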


\begin{proof}
    Let $\sigma, \tau$ be hyperelliptic involutions on $(C, P)$. By $2$-inseparability and Proposition~\ref{prop:node_classification}, every node of $C$ is a fixed point where both involutions interchange the local branches. Consequently, their lifts $\tilde{\sigma}, \tilde{\tau}$ to the normalization $\tilde{C}$ fix the preimages of $P$, interchange the preimages of each node, and yield rational quotients $\tilde{C}/\langle\tilde{\sigma}\rangle \simeq \tilde{C}/\langle\tilde{\tau}\rangle \simeq \PP^1_k$.

    If $C$ is irreducible, the uniqueness of the lifted involution on the smooth curve $\tilde{C}$ for $g(\tilde{C}) \geq 2$ follows from classical theory. If $g(\tilde{C}) = 1$, stability requires $\tilde{C}$ to have at least one marked point or the preimages of a node. If there is a marked point, both lifts fix it; taking this point as the origin of the group law, the unique involution yielding a genus $0$ quotient is the inversion morphism $[-1]$. If there are only nodes, both lifts must interchange the two preimages of a node. While a translation by a $2$-torsion point could also interchange these points, it would yield a genus $1$ quotient; the requirement of a $\PP^1_k$ quotient ensures the involution is the composition of the inversion morphism with a translation, which is uniquely determined by the interchanged pair. Finally, if $g(\tilde{C}) = 0$, stability requires $\tilde{C} \simeq \PP^1_k$ to have at least three special points. Since an automorphism of $\PP^1_k$ is uniquely determined by its action on three points, and both lifts act identically by fixing marked points and interchanging node preimages, they must coincide. Thus, $\tilde{\sigma} = \tilde{\tau}$.

    Now assume $C$ is reducible. Because the irreducible quotient $\PP^1_k$ forces a transitive action, and involutions generate orbits of size at most two, the reducible curve $C$ must consist of exactly two interchanged components. This precludes the existence of marked points (hence $n=0$) and forces $C_1 \simeq C_2 \simeq \PP^1_k$. For stability, they must intersect at $g(C)+1 \geq 3$ nodes. The restrictions $\tilde{\sigma}|_{C_1}$ and $\tilde{\tau}|_{C_1}$ are isomorphisms $C_1 \to C_2$ agreeing on these $\geq 3$ node preimages, hence $\tilde{\sigma} = \tilde{\tau}$. 

    In all cases, $\tilde{\sigma} = \tilde{\tau}$, which implies $\sigma = \tau$.
\end{proof}

\begin{theorem} \label{thm:uniqueness_hyperelliptic}
    The hyperelliptic involution on a hyperelliptic stable pointed curve is uniquely determined.
\end{theorem}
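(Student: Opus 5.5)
The plan is to induct on the number $N(C)$ of separating nodes plus separating pairs of $C$, combining Lemma~\ref{lem:decomp_sep_node}, Lemma~\ref{lem:decomp_sep_pair} and Lemma~\ref{lem:unique_2_inse}. Let $(C,P)$ be hyperelliptic, and let $\sigma,\tau$ be two hyperelliptic involutions on it. By Proposition~\ref{prop:node_classification}, $C$ is of semicompact type, so both decomposition lemmas apply to it.

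\emph{Base case.} If $N(C)=0$, then $C$ is $2$-inseparable, and Lemma~\ref{lem:unique_2_inse} gives $\sigma=\tau$ directly.

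\emph{Separating node.} Suppose $C$ has a separating node $p$, with pointed decomposition $\{(C_1,P_1),(C_2,P_2)\}$. By Lemma~\ref{lem:decomp_sep_node}, both pieces are hyperelliptic. The restrictions $\sigma|_{C_i}$ and $\tau|_{C_i}$ are hyperelliptic involutions on $(C_i,P_i)$. Each $C_i$ has strictly smaller $N$, and I verify this with the following bookkeeping:
\begin{itemize}
\item the separating nodes of $C_i$ are exactly the separating nodes of $C$ other than $p$ that lie in $C_i$;
\item the separating pairs of $C_i$ are exactly the separating pairs of $C$ contained in $C_i$.
\end{itemize}
Both facts hold because a bridge splits the dual graph into two blocks, and cuts inside one block are unaffected by the other. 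By induction, the restrictions agree on each $C_i$. The bijection in Lemma~\ref{lem:decomp_sep_node} is given by restriction and gluing, so $\sigma=\tau$.

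\emph{Separating pair, no separating node.} Otherwise choose a separating pair $\{p,q\}$ and pass to the contracted components $(C_i',P_i')$. By Lemma~\ref{lem:decomp_sep_pair}, $\sigma$ and $\tau$ induce hyperelliptic involutions $\sigma_i',\tau_i'$ on $C_i'$. This map is the one in the lemma's bijection: restrict to $C_i$, where $\sigma$ and $\tau$ swap the two points of $S_i$ by Proposition~\ref{prop:node_classification}, then descend to $C_i'$. So once $\sigma_i'=\tau_i'$ for $i=1,2$, injectivity of the correspondence gives $\sigma=\tau$. Concretely, $\sigma|_{C_i}$ is the unique lift of $\sigma_i'$ to the partial normalization $C_i\to C_i'$, and $\sigma$ is recovered by gluing.

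\emph{Main obstacle.} The hardest step is checking $N(C_i')<N(C)$, which needs graph combinatorics. The dual graph of $C_i'$ is obtained from that of $C$ by contracting the other side to a single loop edge $r_i$. By the remark after Definition~\ref{def:contracted_components}, $r_i$ is strongly non-separating, so it contributes nothing to $N$. I would then show that every other separating node or separating pair of $C_i'$ is already a separating node or pair of $C$ contained in $C_i$. The reason is that a minimal edge cut of $C_i'$ avoiding $r_i$ keeps both endpoints of $r_i$ on the same side, and then it lifts to a minimal cut of $C$, because the other side $C_{3-i}$ is connected. Since $\{p,q\}$ itself disappears, $N$ drops by at least one.

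If this count becomes delicate, I would instead induct on the number of irreducible components or the total number of nodes. I would not use the genus, since it is preserved, and would rely on the fact that each $C_i'$ has strictly fewer irreducible components than $C$. That holds because each side of a separating pair contains at least one component.
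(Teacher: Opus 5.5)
Your proposal is correct and follows the paper's own route: induction on the number of separating nodes and separating pairs, splitting them off with Lemmas~\ref{lem:decomp_sep_node} and~\ref{lem:decomp_sep_pair} until you reach the $2$-inseparable case of Lemma~\ref{lem:unique_2_inse}; you also supply the bookkeeping that the paper's one-line proof leaves implicit, namely that $N$ strictly drops and that $\sigma$ is recovered from the $\sigma_i'$ by unique lifting and gluing. One small wording slip is that $r_i$ is a loop in the dual graph of $C_i'$ only when the two points of $S_i$ lie on the same irreducible component, but your cut-lifting argument never uses this, so nothing is affected.
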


\begin{proof}
    This follows by induction on the number of separating nodes and separating pairs, using Lemmas~\ref{lem:decomp_sep_node} and \ref{lem:decomp_sep_pair} to reduce the problem to the $2$-inseparable base case of Lemma~\ref{lem:unique_2_inse}.
\end{proof}

The preceding proofs demonstrate that any stable pointed curve of semicompact type canonically decomposes into $2$-inseparable stable pointed curves, which we call its \emph{$2$-inseparable components}. This decomposition, together with our uniqueness proof, directly yields the following structural characterization of stable pointed hyperelliptic curves entirely in terms of smooth pointed curves.

\begin{theorem} \label{thm:structure_hyperelliptic}
    A stable pointed curve $(C, P)$ is hyperelliptic if and only if it is of semicompact type and all its $2$-inseparable components are hyperelliptic. 
    
    A $2$-inseparable stable $n$-pointed curve $(C, P)$ is hyperelliptic if and only if exactly one of the following holds:
    \begin{enumerate}
        \item $C$ is irreducible. Let $\nu\colon \tilde{C} \to C$ be its normalization, let $\tilde{P} = \nu^{-1}(P)$ be the preimages of the marked points, and let $\tilde{g} = g(\tilde{C})$. Depending on $\tilde{g}$, exactly one of the following holds:
        \begin{itemize}
            \item If $\tilde{g} \geq 2$: The smooth curve $\tilde{C}$ is hyperelliptic, and its unique hyperelliptic involution $\tilde{\sigma}$ fixes $\tilde{P}$ pointwise and interchanges the two preimages of every node of $C$.
            \item If $\tilde{g} = 1$: There exists an involution $\tilde{\sigma}$ on $\tilde{C}$ that fixes $\tilde{P}$ pointwise, interchanges the two preimages of every node of $C$, and yields a rational quotient $\tilde{C}/\langle\tilde{\sigma}\rangle \simeq \PP^1_k$.
            \item If $\tilde{g} = 0$: There exists an involution $\tilde{\sigma}$ on $\tilde{C} \simeq \PP^1_k$ that fixes $\tilde{P}$ pointwise and interchanges the two preimages of every node of $C$. Furthermore, if $\Char k \neq 2$, $\tilde{\sigma}$ does not act as the identity.
        \end{itemize}
        
        \item $C$ is a \emph{binary curve}, $n=0$, and the tuples of intersection nodes on the two components are projectively equivalent.
    \end{enumerate}
    In the irreducible case, $n$ is bounded by the number of fixed points of $\tilde{\sigma}$. Specifically, $n \leq 2\tilde{g} + 2$ if $\Char k \neq 2$. If $\Char k = 2$, then $n \leq \tilde{g} + 1$ unless $C$ is smooth of genus $0$, in which case $n$ is unbounded.
\end{theorem}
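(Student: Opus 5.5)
The first assertion is a formal consequence of the decomposition lemmas. If $(C,P)$ is hyperelliptic, then Proposition~\ref{prop:node_classification} shows it is of semicompact type. For the reduction, induct on the number of separating nodes plus the number of separating pairs. At a separating node, apply Lemma~\ref{lem:decomp_sep_node}. At a separating pair, apply Lemma~\ref{lem:decomp_sep_pair}, which uses semicompactness.

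Each lemma is an equivalence. It replaces $(C,P)$ by two pieces that have strictly fewer separating nodes and pairs. For this I will check that the contracted components of a semicompact curve are again semicompact, and that the new node $r_i$ is strongly non-separating, as remarked after Definition~\ref{def:contracted_components}. Iterating, we reach the $2$-inseparable components, and $(C,P)$ is hyperelliptic if and only if all of them are. Conversely, if $(C,P)$ is not of semicompact type it cannot be hyperelliptic, again by Proposition~\ref{prop:node_classification}.

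For the $2$-inseparable case, Lemma~\ref{lem:unique_2_inse} shows $C$ is irreducible or binary, so it remains to characterize each type. Necessity follows from the proof of that lemma. By Proposition~\ref{prop:node_classification}, every node is of type $\Xi_{\mathrm{irr}}$, so $\sigma$ lifts to $\tilde\sigma$ on $\tilde C$. This lift fixes $\tilde P$, swaps the preimages of each node, and has quotient $\PP^1_k$, since the quotient of $C$ is $\PP^1_k$ and the normalization of $\PP^1_k$ is itself. When $\tilde g\ge 2$, uniqueness forces $\tilde\sigma$ to be the hyperelliptic involution. When $\tilde g=0$, condition~\ref{def:hyperelliptic_ii} gives $\tilde\sigma\ne\id$ if $\Char k\ne 2$. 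In the binary case, $\sigma$ restricts to an isomorphism $C_1\to C_2$ carrying the node preimages on $C_1$ to the corresponding ones on $C_2$, which gives projective equivalence.

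For sufficiency, descend $\tilde\sigma$ to $C$. It swaps the two branches at each node, so it glues to an involution $\sigma$ fixing $P$. The quotient $C/\langle\sigma\rangle$ is obtained from $\tilde C/\langle\tilde\sigma\rangle\simeq\PP^1_k$ by identifying the images of the two preimages of each node; these images coincide, so no identification happens and the quotient is still $\PP^1_k$. One also checks that $\sigma$ is not the identity on a component when $\Char k\neq2$. In the binary case, let $\phi\colon C_1\to C_2$ be the projective equivalence. Then $\phi\sqcup\phi^{-1}$ is an involution of $C_1\sqcup C_2$ compatible with the gluing, and the quotient is $C_1\simeq\PP^1_k$.

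The \emph{main obstacle} is the final bound on $n$ and the precise fixed-point counts. For $\Char k\ne 2$, Riemann--Hurwitz shows a degree-$2$ cover $\tilde C\to\PP^1$ has exactly $2\tilde g+2$ fixed points. This bounds $|\tilde P|=n$, and it also covers $\tilde g\le1$: there are $2$ fixed points for $\tilde g=0$ and $4$ for $\tilde g=1$.

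For $\Char k=2$, I would use that a separable degree-$2$ cover is Artin--Schreier. The number $r$ of ramification points satisfies $2\tilde g-2=-4+\sum_i 2(m_i+1)$ with $m_i\ge1$ odd, so $r\le \tilde g+1$. The remaining case is $\tilde\sigma=\id$, which is allowed only when $\Char k=2$ and $\tilde g=0$. I must then argue that this forces $C$ to be smooth. If $C$ had a node, $\tilde\sigma$ would have to swap its two preimages, which is impossible for the identity. So $C=\PP^1_k$ with the identity involution, and $n$ is unrestricted by the involution.

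Exactly-one-holds is automatic, since the cases are distinguished by $\tilde g$ and by irreducibility.
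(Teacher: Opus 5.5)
Your proposal follows the paper's route: semicompactness from Proposition~\ref{prop:node_classification}, induction via Lemmas~\ref{lem:decomp_sep_node} and~\ref{lem:decomp_sep_pair}, the $2$-inseparable case from the proof of Lemma~\ref{lem:unique_2_inse}, and Riemann--Hurwitz resp.\ Artin--Schreier theory for the bound on $n$. You supply more than the paper does. You check that contracted components stay semicompact, and you give the sufficiency direction explicitly. There, ``no identification happens'' is justified because a $\tilde\sigma$-invariant function automatically takes equal values at $x$ and $\tilde\sigma(x)$, so $C/\langle\sigma\rangle=\tilde C/\langle\tilde\sigma\rangle$.

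One step is wrong as written: the characteristic-$2$ Riemann--Hurwitz formula. For a $\ZZ/2\ZZ$ Artin--Schreier cover, the different exponent at a ramification point with odd conductor $m_i$ is $(2-1)(m_i+1)=m_i+1$, not $2(m_i+1)$. The correct relation is
\[
2\tilde g-2=-4+\sum_{i=1}^{r}(m_i+1), \qquad\text{i.e.}\qquad \tilde g+1=\sum_{i=1}^{r}\frac{m_i+1}{2}\ \geq\ r .
\]
Your formula would instead give $\tilde g+1=\sum_i(m_i+1)\geq 2r$, i.e.\ $r\leq(\tilde g+1)/2$, and this is false. The involution $x\mapsto x+1$ of $\PP^1_k$ has one fixed point; your formula would force $m_1=0$ there. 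Likewise, $[-1]$ on an ordinary elliptic curve has two fixed points. The conclusion you state, $n\le r\le\tilde g+1$, is the right one and follows from the corrected formula. Fixing the different exponent repairs the argument, and everything else matches the paper.
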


\begin{proof}
    The structural characterization follows directly from the proof of Theorem~\ref{thm:uniqueness_hyperelliptic}.

    In the irreducible case, the hyperelliptic involution fixes $P$ pointwise, so $n$ cannot exceed the number of fixed points of the lift $\tilde{\sigma}$ on $\tilde{C}$. The quotient map $\tilde{C} \to \tilde{C}/\langle\tilde{\sigma}\rangle \simeq \PP^1_k$ has degree $2$ unless $\sigma$ is the identity. By definition, this trivial action can occur if and only if $\Char k = 2$. In this scenario, the lift $\tilde{\sigma}$ cannot interchange the preimages of any nodes, which forces $C$ to be smooth, yielding $C \simeq \tilde{C} \simeq \PP^1_k$. For $\Char k \neq 2$, the Riemann--Hurwitz formula yields exactly $2\tilde{g} + 2$ ramification points, providing the bound $n \leq 2\tilde{g} + 2$. For $\Char k = 2$, assuming $C$ is not smooth of genus $0$, wild ramification restricts the number of fixed points of $\tilde{\sigma}$ to at most $\tilde{g} + 1$ via Artin--Schreier theory, yielding the bound $n \leq \tilde{g} + 1$.
\end{proof}

\subsection{Hyperellipticity in Characteristic 2}

We now clarify the divergent behavior of hyperellipticity in characteristic $2$. 

\begin{proposition} \label{prop:hyperelliptic_genus2}
    Let $(C, P)$ be a stable pointed curve equipped with an involution $\sigma$ that fixes $P$ pointwise and yields a rational tree quotient $C/\langle\sigma\rangle$.
    For an irreducible component $Z \subset C$, the restriction $\sigma|_Z = \id_Z$ if and only if $Z \simeq \PP^1_k$ and $Z$ intersects the rest of the curve exclusively at separating nodes.
    Consequently, $(C,P)$ is hyperelliptic if and only if either $\Char k = 2$ or $C$ does not contain any such component $Z$.
\end{proposition}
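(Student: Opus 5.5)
The plan is to analyse the local structure of $\sigma$ near a component $Z$ on which it is the identity, using only that $\sigma$ is an involution fixing $P$ with rational tree quotient $\pi\colon C\to T$. The node classification of Proposition~\ref{prop:node_classification} is phrased for hyperelliptic involutions, but its proof, and that of Lemma~\ref{lem:orbit_decomposition_preserved}, never used Condition~\ref{def:hyperelliptic_ii}. So I will first note that both results hold verbatim for any involution of this kind. The final ``consequently'' is then immediate from Definition~\ref{def:hyperelliptic}: the only extra requirement when $\Char k\neq 2$ is that no such $Z$ exist.

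\emph{Forward direction.} Suppose $\sigma|_Z=\id_Z$. Then $\pi|_Z$ is a birational finite map onto an irreducible component of $T$, and that component is $\PP^1_k$. Since $Z$ is fixed pointwise, $\sigma$ fixes every node $p$ lying on $Z$, and it preserves the branch of $p$ along $Z$.
\begin{itemize}
\item If $p$ is a self-node of $Z$, both branches lie on $Z$ and are preserved.
\item If $p$ joins $Z$ to another component $Z'$, then $\sigma(Z')$ must contain $p$ and is not $Z$, so the branch along $Z'$ is preserved as well.
\end{itemize}
By the characterization $(\Xi)$ in Proposition~\ref{prop:node_classification}, every such node is separating. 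A self-node is never separating, so $Z$ has no self-nodes and is therefore smooth. Being birational onto $\PP^1_k$ and normal, $Z$ is isomorphic to $\PP^1_k$. Thus $Z\simeq\PP^1_k$ and all its nodes are separating.

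\emph{Converse.} Suppose $Z\simeq\PP^1_k$ meets the rest of $C$ only at separating nodes $p_1,\dots,p_r$. By $(\Xi)$, each $p_i$ is fixed by $\sigma$ and $\sigma$ respects the decomposition at each $p_i$. In particular, $\sigma$ preserves the connected component of the normalization at $\{p_1,\dots,p_r\}$ that equals $Z$, so $\sigma|_Z$ is an involution of $\PP^1_k$ fixing the special points of $Z$ (the $p_i$ and the marked points on $Z$). Stability gives at least three special points on $Z$. An automorphism of $\PP^1_k$ fixing three points is the identity, so $\sigma|_Z=\id_Z$.

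\emph{Main obstacle.} The delicate step is justifying that Proposition~\ref{prop:node_classification}$(\Xi)$ applies without Condition~\ref{def:hyperelliptic_ii}. Its proof relies on Lemma~\ref{lem:orbit_decomposition_preserved}, Lemma~\ref{lem:chains_of_cuts}, and the quotient being a tree; the statement ``fixed with branches preserved implies $\pi(p)$ is a node'' needs the local quotient of a node by a branch-preserving involution to remain a node. In characteristic $2$ with $\sigma$ trivial on a branch, this is clear, because the quotient is locally the node itself on that side. I would check this local statement, together with the identification of $\pi(p)$ for the two branch actions, directly in the completed local ring $k[[x,y]]/(xy)$ before invoking the proposition.
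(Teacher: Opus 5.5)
Your proposal is correct and follows essentially the same route as the paper. Both invoke Proposition~\ref{prop:node_classification} for involutions that need not satisfy Condition~\ref{def:hyperelliptic_ii}, show that every node on a pointwise-fixed $Z$ is fixed with preserved branches (hence separating), and use rigidity of $\PP^1_k$ with three fixed special points for the converse. Two small differences, both fine: you are slightly more careful than the paper in getting $Z \simeq \PP^1_k$ (ruling out self-nodes and using birationality plus normality, rather than asserting that $\pi$ embeds $Z$), and for the final claim you use that the criterion on $Z$ is intrinsic to $C$ instead of appealing to uniqueness.
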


\begin{remark}
    Such a component is referred to as a \emph{loosely connected rational tail} in the terminology of \cite[Definition~3.2, Remark~3.3]{catanese1982pluricanonical}.
\end{remark}

\begin{proof}
    We may apply Proposition~\ref{prop:node_classification} and Theorem~\ref{thm:uniqueness_hyperelliptic} to $\sigma$, as their proofs do not rely on Definition~\ref{def:hyperelliptic}\ref{def:hyperelliptic_ii}.

    If $\sigma|_Z = \id_Z$, the quotient map embeds $Z$ into the rational tree $T$, forcing $Z \simeq \PP^1_k$. 
    Any node connecting $Z$ to $C \setminus Z$ is fixed with preserved branches, making it a separating node by Proposition~\ref{prop:node_classification}. 
    
    Conversely, if $Z \simeq \PP^1_k$ connects exclusively via separating nodes, $\sigma$ fixes these nodes and preserves $Z$. 
    The restriction $\sigma|_Z$ must fix the connecting nodes and $P \cap Z$. 
    Stability requires at least three such special points; since the only automorphism of $\PP^1_k$ fixing three points is the identity, $\sigma|_Z = \id_Z$. 
    The final claim follows from uniqueness and the characteristic-dependent condition in Definition~\ref{def:hyperelliptic}.
\end{proof}

\begin{example}\label{ex:char2_exclusive}
    Consider a stable curve $C$ of genus $3$ obtained by attaching three smooth elliptic curves $(E_i, p_i)_{i=1,2,3}$ to a smooth rational component $Z \simeq \PP^1_k$. 
    
    \begin{figure}[h!]
    \centering
    \begin{tikzpicture}[scale=1.2]
        % Curve C
        \begin{scope}[shift={(0,0)}]
            \draw (0, 0) -- (3.0, 0) node[right] {$Z$};
            \draw[line width = 1.5] (0.2, 0.8) node[above] {$E_1$} -- (0.8, -0.8);
            \draw[line width = 1.5] (1.5, 0.8) node[above] {$E_2$} -- (1.5, -0.8);
            \draw[line width = 1.5] (2.8, 0.8) node[above] {$E_3$} -- (2.2, -0.8);
            \filldraw (0.5, 0) circle (1.5pt) node[below left] {$p_1$};
            \filldraw (1.5, 0) circle (1.5pt) node[below right] {$p_2$};
            \filldraw (2.5, 0) circle (1.5pt) node[below right] {$p_3$};
        \end{scope}

        % Arrow
        \draw[->, thick] (3.8, 0) -- (5.2, 0) node[midway, above] {$/\langle \sigma \rangle$};

        % Quotient tree
        \begin{scope}[shift={(6.0,0)}]
            \draw (0, 0) -- (3.0, 0);
            \draw (0.2, 0.8) -- (0.8, -0.8);
            \draw (1.5, 0.8) -- (1.5, -0.8);
            \draw (2.8, 0.8) -- (2.2, -0.8);
            \filldraw (0.5, 0) circle (1.5pt);
            \filldraw (1.5, 0) circle (1.5pt);
            \filldraw (2.5, 0) circle (1.5pt);
        \end{scope}
    \end{tikzpicture}
    \caption*{Visualization of the quotient map $C \to C/\langle \sigma \rangle$. Thick lines indicate components of geometric genus $1$, while thin lines indicate geometric genus $0$.}
    \end{figure}

    Define an involution $\sigma \in \Aut_k(C)$ by taking the $[-1]$-involution on each elliptic curve $E_i$ (which fixes the node $p_i$) and the identity map on $Z$. Then the quotient $C/\langle \sigma \rangle$ is a rational tree and $Z$ is a smooth rational component that intersects the rest of the curve exclusively at the separating nodes $p_1, p_2$, and $p_3$. Proposition~\ref{prop:hyperelliptic_genus2} implies that $C$ is hyperelliptic if and only if $\Char k = 2$.
\end{example}

Conversely, wild ramification in characteristic $2$ can also prevent certain combinatorial types from being hyperelliptic.

\begin{example}\label{ex:char2_forbidden}
    Let $C$ be a stable curve of genus $3$ consisting of a central rational nodal curve $Z$ (i.e., a component of geometric genus $0$ with one self-node) attached to two smooth elliptic curves $(E_i, p_i)_{i=1,2}$.
    
    \begin{figure}[h!]
    \centering
    \begin{tikzpicture}[scale=0.85]
        % Curve C
        \begin{scope}[shift={(0,0)}]
            \draw (-1.0, 0.0) to[curve through={(1.0, 0.2) (1.2, 0.4) (1.0, 0.8) (0.8, 0.4) (1.0, 0.2)}] (3.0, 0.0);
            \node at (3.3, 0) {$Z$};
            \draw[line width = 1.5] (-0.5, 1.5) node[above] {$E_1$} -- (0.0, -0.5);
            \draw[line width = 1.5] (2.5, 1.5) node[above] {$E_2$} -- (2.0, -0.5);
            \filldraw (-0.085, -0.16) circle (2pt) node[below left] {$p_1$};
            \filldraw (2.085, -0.16) circle (2pt) node[below right] {$p_2$};
        \end{scope}

        % Arrow
        \node[scale=1.5] at (5.2, 0.3) {$\leadsto$};
        \node at (5.2, 0.8) {decompose};

        % 2-inseparable components
        \begin{scope}[shift={(9.5,0)}]
            \begin{scope}[shift={(0,0)}]
                \draw (-1.0, 0.0) to[curve through={(1.0, 0.2) (1.2, 0.4) (1.0, 0.8) (0.8, 0.4) (1.0, 0.2)}] (3.0, 0.0);
                \node at (3.3, 0) {$Z$};
                \filldraw (-0.085, -0.16) circle (2pt) node[below left] {$p_1$};
                \filldraw (2.085, -0.16) circle (2pt) node[below right] {$p_2$};
            \end{scope}
            \begin{scope}[shift={(-2.0, 0)}]
                \draw[line width = 1.5] (-0.5, 1.5) node[above] {$E_1$} -- (0.0, -0.5);
                \filldraw (-0.085, -0.16) circle (2pt) node[below left] {$p_1$};
            \end{scope}
            \begin{scope}[shift={(2.0, 0)}]
                \draw[line width = 1.5] (2.5, 1.5) node[above] {$E_2$} -- (2.0, -0.5);
                \filldraw (2.085, -0.16) circle (2pt) node[below right] {$p_2$};
            \end{scope}
        \end{scope}
    \end{tikzpicture}
    \caption*{Decomposition of $C$ into its three $2$-inseparable components. Thick lines indicate components of geometric genus $1$, while thin lines indicate geometric genus $0$.}
    \end{figure}

    We claim that if $\Char k = 2$, then $C$ is not hyperelliptic. Indeed, consider the $2$-inseparable component corresponding to the central curve $Z$. Its normalization has geometric genus $\tilde{g} = 0$ and carries two marked points corresponding to the intersections with the elliptic tails. By Theorem~\ref{thm:structure_hyperelliptic}, however, a hyperelliptic component in characteristic $2$ can possess at most $\tilde{g}+1 = 1$ marked points. Thus, $C$ is not hyperelliptic.
\end{example}

\subsection{A Characterization via the Canonical Sheaf}

A classical theorem for smooth projective curves of genus $g \geq 2$ states that $C$ is not hyperelliptic if and only if the canonical sheaf $\omega_{C/k}$ is very ample. This characterization extends naturally to $2$-inseparable stable curves.

\begin{theorem} \label{thm:canonical_embedding_general}
    Let $C$ be a $2$-inseparable stable curve. The canonical sheaf $\omega_{C/k}$ is very ample if and only if $C$ is not hyperelliptic.
\end{theorem}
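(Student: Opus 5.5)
The plan is to reduce both directions to the normalization, using the structural description in Lemma~\ref{lem:unique_2_inse} and Theorem~\ref{thm:structure_hyperelliptic}, together with the standard numerical criterion for very ampleness of $\omega_{C/k}$ on a Gorenstein curve. On a $2$-inseparable stable curve (no separating nodes, and hence no separating points at all), $\omega_{C/k}$ is globally generated. It is then very ample if and only if, for every length-$2$ subscheme $\xi \subset C$ (two distinct points, or a tangent vector, including those supported at nodes), we have $h^0(\omega_{C/k}(-\xi)) = h^0(\omega_{C/k}) - 2$. By Serre duality this is equivalent to $h^0(\mathcal{I}_\xi^\vee) = 1$, that is, $h^0(\mathcal{O}_C(\xi)) = 1$ when $\xi$ is Cartier, with the dual statement for $\xi$ the maximal ideal at a node. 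Equivalently, $\omega_{C/k}$ fails to be very ample if and only if there is a degree-$2$, rank-$1$ torsion-free sheaf $\mathcal{F}$ on $C$ with $h^0(\mathcal{F}) \geq 2$. This is the Cornalba--Catanese type ``$g^1_2$'' criterion, and it holds in any characteristic.

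\emph{Hyperelliptic $\Rightarrow$ not very ample.} Let $\sigma$ be the hyperelliptic involution with quotient $\pi\colon C \to T \simeq \PP^1_k$; Proposition~\ref{prop:node_classification} gives $T \simeq \PP^1_k$ since $C$ is $2$-inseparable. I would take $\mathcal{F} = \pi^*\mathcal{O}_{\PP^1}(1)$. In characteristic $\neq 2$ the map $\pi$ has degree $2$ on $C$, since $\sigma$ is nontrivial on every component. So $\mathcal{F}$ is a line bundle of total degree $2$ with $h^0 \geq 2$, and any fibre of $\pi$ gives a length-$2$ $\xi$ that is not separated. In characteristic $2$ I would first use Lemma~\ref{lem:unique_2_inse}: $C$ is irreducible or binary. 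Moreover $\sigma$ cannot be the identity on $C$: by Theorem~\ref{thm:structure_hyperelliptic} that forces $C \simeq \PP^1_k$, contradicting $g \geq 2$. So $\pi$ again has degree $2$ and the same argument applies.

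\emph{Not very ample $\Rightarrow$ hyperelliptic.} Take $\mathcal{F}$ as above, of degree $2$ with $h^0(\mathcal{F}) \geq 2$. First I would show that a pencil $V \subset H^0(\mathcal{F})$ is base-point free. A base point would produce a degree-$1$ sheaf with two sections, and on a $2$-inseparable stable curve of genus $\geq 2$ such a sheaf would force a rational component meeting the rest in a single point, i.e.\ a separating node. The pencil then defines a finite map $\phi\colon C \to \PP^1_k$ of total degree $2$ when $\mathcal{F}$ is invertible. If $\mathcal{F}$ is not locally free at some node, one instead pulls back to the partial normalization at those nodes, and the degree bookkeeping forces each such node to be glued to a fibre. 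The total degree is $2$, so either $C$ is irreducible and $\phi$ has degree $2$, or $C$ has two components each mapped isomorphically to $\PP^1_k$, giving a binary curve. The nontrivial deck involution of $\phi$ (respectively the map swapping the two components via $\phi$) defines $\sigma$ on the normalization. Each node lies in a single fibre of $\phi$, so $\sigma$ interchanges the two preimages of every node and descends to $C$ with quotient $\PP^1_k$. Finally, $\sigma$ is not the identity on any component because $\phi$ has degree $2$ on each irreducible component or swaps the components, so condition~\ref{def:hyperelliptic_ii} holds and $C$ is hyperelliptic.

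The main obstacle is this converse direction at the nodes. There one must handle non-locally-free $\mathcal{F}$, or equivalently length-$2$ subschemes supported at a node, and show that the resulting degree-$2$ map really is a quotient by an involution. In characteristic $2$ one must also check that the degree-$2$ cover, possibly inseparable on a component, still comes from an automorphism. An inseparable degree-$2$ map is a Frobenius, which would force the component to be rational with no nodes interchanged, contradicting $g \geq 2$ and $2$-inseparability. I would handle the node case by working on the partial normalization at the non-locally-free nodes and applying the smooth-curve argument there, using the case analysis in Theorem~\ref{thm:structure_hyperelliptic} for $\tilde g \in \{0,1\}$.
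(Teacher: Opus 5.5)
Your forward direction is correct, and it is more elementary than the paper's. The paper obtains the statement as the case $P=\emptyset$ (so $C'=C$) of Theorem~\ref{thm:canonical_embedding_general_pointed} and cites Catanese's Proposition~3.10 for this direction. You instead note that a fibre $\xi=\pi^{-1}(t)$ has $h^0(\mathcal{O}_C(\xi))\geq 2$, so it is not separated.

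\textbf{The converse is a genuine gap.} It is exactly what the paper imports from Catanese: Theorem~F identifies honestly hyperelliptic with hyperelliptic under very strong connectedness, and Theorem~G gives very ampleness in the non-hyperelliptic case. Your sketch does not prove it, and two of its intermediate claims are false on reducible curves.
\begin{enumerate}
\item Your criterion holds in only one direction. Take a binary curve of genus $g$ and a line bundle of bidegree $(g+3,-g-1)$. It has total degree $2$ and $h^0=3$, since its sections are forms of degree $g+3$ on the first component vanishing at the $g+1$ nodes. An argument that uses only $\deg\mathcal{F}=2$ and $h^0(\mathcal{F})\geq 2$ would therefore prove that every binary curve is hyperelliptic, which is false for general ones with $g\geq 3$. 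You must keep $\mathcal{F}=\mathcal{H}om(\mathcal{I}_\xi,\mathcal{O}_C)$ together with its inclusion $\mathcal{O}_C\hookrightarrow\mathcal{F}$, whose cokernel has length $2$ and is supported on $\xi$.
\item Your base-point lemma is false. Bidegree $(g+2,-g-1)$ gives a degree-$1$ line bundle with two sections on a binary curve, which has no rational component attached at a single point. What works is to take the pencil through the section $1$, so its base locus lies in $\mathrm{supp}\,\xi$. A base point then yields a length-$1$ subscheme $\zeta$ with $h^0(\mathcal{H}om(\mathcal{I}_\zeta,\mathcal{O}_C))\geq 2$. By your own Serre-duality computation, this is a base point of $|\omega_{C/k}|$. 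For example, at a node $x$ with $\xi$ Cartier, one gets $\mathfrak{m}_x\mathcal{F}\cong\nu_*\mathcal{O}_{C_x}$, where $C_x$ is the partial normalization at $x$. This sheaf has only one section because $x$ is non-separating.
\end{enumerate}

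\textbf{Two further steps are missing.}
\begin{enumerate}
\item Your dichotomy ``irreducible of degree $2$, or binary'' ignores components on which $\phi$ is constant. Excluding them needs $3$-edge-connectivity: a contracted connected subcurve meets its complement in at least three nodes, all lying in one fibre of a map of degree at most $2$.
\item The non-locally-free case is mis-described. Suppose $\xi$ is the tangent vector along one branch at a node $p$, with ideal $(y,x^2)$ where $xy=0$. Then $\mathcal{F}\cong\nu_*\mathcal{O}_{C_p}(p_1)$, and the pencil is a \emph{degree-$1$} pencil on $C_p$. Nothing is ``glued to a fibre''. This case has to be shown impossible using $2$-inseparability and stability, and the same holds when $\xi$ contains the reduced point of a node.
\end{enumerate}

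You flag this as the main obstacle and leave it open, so the converse is currently a plan rather than a proof. With these repairs your route would give a self-contained alternative to citing Catanese. Your final step (the deck involution, its descent through the nodes, and the Frobenius case in characteristic $2$) matches the paper's (ii)$\Rightarrow$(i).
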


This result essentially follows from \cite{catanese1982pluricanonical}. The details of this deduction will become clear in the proof of the more general version below, which is applicable to all $2$-inseparable stable pointed curves. To formulate this, we introduce the following definition:

\begin{definition}
    Let $(C, P)$ be a $2$-inseparable stable pointed curve. The associated \emph{pinched curve} $C'$ is constructed by replacing each marked point $p \in P$ with an ordinary cusp. Formally, we define $C'$ by replacing the local ring $\mathcal{O}_{C,p}$ with the subring $\mathcal{O}_{C',p} := k + \mathfrak{m}_p^2 \subset \mathcal{O}_{C,p}$ for every $p \in P$, where $\mathfrak{m}_p$ denotes the maximal ideal of $\mathcal{O}_{C,p}$.
\end{definition}

For the general theory of such local gluing and pinching constructions, see for example the classical treatment by Serre \cite[Chapter IV, \S 1]{serre}. By construction, $C'$ has only Gorenstein singularities, and hence its dualizing sheaf $\omega_{C'/k}$ is invertible. Furthermore, the natural map $\nu \colon C \to C'$---which acts as the identity on the underlying topological spaces and as the canonical inclusion on the local rings---is precisely the partial normalization at the cusps.

\begin{theorem}\label{thm:canonical_embedding_general_pointed}
Let $(C, P)$ be a $2$-inseparable stable pointed curve, let $C'$ be its associated pinched curve, and let $\nu \colon C \to C'$ denote the partial normalization at the cusps. Then the following conditions are equivalent:
\begin{enumerate}
    \item The stable pointed curve $(C, P)$ is hyperelliptic.
    \item There exists a finite morphism $f \colon C' \to \PP^1_k$ of degree $2$.
    \item The relative dualizing sheaf $\omega_{C'/k}$ is not very ample.
\end{enumerate}
\end{theorem}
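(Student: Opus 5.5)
I will prove the cycle (i) $\Rightarrow$ (ii) $\Rightarrow$ (iii) $\Rightarrow$ (i), using the $2$-inseparable structure result (Lemma~\ref{lem:unique_2_inse} and Theorem~\ref{thm:structure_hyperelliptic}) for the geometric input and Catanese's results \cite{catanese1982pluricanonical} on very ampleness of dualizing sheaves of Gorenstein curves for the last implication.

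\textbf{(i) $\Rightarrow$ (ii).} Since $C$ is $2$-inseparable, Proposition~\ref{prop:node_classification} gives $T = C/\langle\sigma\rangle \simeq \PP^1_k$, so we have a finite quotient map $\pi\colon C \to \PP^1_k$. I must show $\pi$ factors through $\nu\colon C \to C'$ and that the induced map has degree $2$.

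For the factorization, it suffices that $\pi^*$ maps the local ring at $\pi(p)$ into $k + \mathfrak{m}_p^2$ for each $p \in P$. Since $\sigma$ fixes $p$, a local parameter $t$ at $\pi(p)$ pulls back to a $\sigma$-invariant function vanishing at $p$. When $\Char k \neq 2$, Condition~\ref{def:hyperelliptic_ii} shows $p$ is a ramification point, so this pullback lies in $\mathfrak{m}_p^2$. In characteristic $2$, fixed points of a nontrivial involution are wildly ramified, which again puts it in $\mathfrak{m}_p^2$. In the degenerate case $\sigma = \id$, Theorem~\ref{thm:structure_hyperelliptic} forces $C \simeq \PP^1_k$ with arbitrary $n$; there I will instead take the map $C' \to \PP^1_k$ to be $\pi$ composed with Frobenius (or construct it directly), following the Maugeais remark.

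The degree of the induced map $C' \to \PP^1_k$ equals $\deg\pi = 2$, because $\nu$ is birational. If $C$ is reducible, it is a binary curve with $n=0$ and the claim is immediate.

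\textbf{(ii) $\Rightarrow$ (iii).} Let $f\colon C'\to\PP^1_k$ be finite of degree $2$. Then a fiber $D = f^{-1}(x)$ is a length-$2$ subscheme with $h^0(\mathcal{O}_{C'}(D)) \geq 2$. By Riemann--Roch and Serre duality on the Gorenstein curve $C'$, this gives $h^0(\omega_{C'}(-D)) \ge h^0(\omega_{C'}) - 1$. So $\omega_{C'}$ fails to separate the length-$2$ scheme $D$, hence is not very ample. If $g(C') \le 1$, the sheaf is trivially not very ample; note that $g(C') = g(C) + n$.

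\textbf{(iii) $\Rightarrow$ (i).} This is the main obstacle. I will invoke Catanese's criterion: for a Gorenstein curve $C'$ which is $2$-connected (which is where $2$-inseparability of $C$ enters, since pinching does not affect the dual graph), $\omega_{C'}$ is very ample unless $C'$ is honestly hyperelliptic, i.e.\ admits a finite degree-$2$ map to $\PP^1_k$, or one of a short list of exceptions holds. I have to check that the exceptions (low genus, or failure of $3$-connectedness, ruled out by $2$-inseparability) do not occur or are covered. Given $f\colon C'\to\PP^1_k$, I then compose with $\nu$ and obtain an involution on $C$: when $f$ is separable, it is the covering involution of the degree-$2$ extension of function fields, extended to $C$ by normality of the partial normalization. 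This involution fixes each cusp, hence each point of $P$, and its quotient is $\PP^1_k$. Condition~\ref{def:hyperelliptic_ii} holds because $f$ has degree $2$ on every component. The inseparable case in characteristic $2$ needs separate handling: there I take $\sigma=\id$ and must check this fits the genus-$0$ exception above.

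Verifying Catanese's hypotheses and exceptions for pinched curves in arbitrary characteristic is where I expect most of the work to lie.
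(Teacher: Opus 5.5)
Your proposal is correct, and for the substantial direction (iii)$\Rightarrow$(i) it follows the paper. Catanese's Theorem G \cite{catanese1982pluricanonical} produces the degree-$2$ map. Its hypotheses all hold for $C'$:
\begin{itemize}
\item $\omega_{C'/k}$ is ample, since $\nu^*\omega_{C'/k}\cong\omega_{C/k}(2P)$ and $(C,P)$ is stable;
\item $C'$ has only double points;
\item $C'$ is very strongly connected, by $2$-inseparability of $C$ (cusps are unibranch, so pinching adds no edges to the dual graph).
\end{itemize}
The same connectedness, via Theorem F, removes the gap between Catanese's ``hyperelliptic'' and honestly hyperelliptic.

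Where you differ is in the two easier implications.
\begin{itemize}
\item For (ii)$\Rightarrow$(iii), the paper first identifies honest hyperellipticity with Catanese's notion and then cites his Proposition 3.10 (the canonical map is not birational). Your Riemann--Roch/Serre duality argument gives $h^0(\omega_{C'/k}(-D)) = h^0(\mathcal{O}_{C'}(D)) + g(C') - 3 \geq g(C')-1$ for a fibre $D$ of $f$. It is self-contained and needs no connectivity hypothesis, though it only yields failure of very ampleness rather than of birationality.
\item For (i)$\Rightarrow$(ii), you note that a fixed smooth point of a non-trivial involution is the whole fibre of the degree-$2$ quotient map, so $e_p=2$ and $\pi^*t\in\mathfrak{m}_p^2$. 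This is uniform in the characteristic and replaces the paper's explicit computation of $k[[t]]^\sigma$ in characteristic $2$.
\end{itemize}

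The one step you assert without justification is that the covering involution $\sigma'$ fixes every cusp. A priori it could interchange two cusps, and this is exactly where the paper invokes Catanese's Theorems E and F (the canonical map is injective on the singular locus and factors through $f$). You can close it elementarily:
\begin{itemize}
\item $f$ is flat, because $f_*\mathcal{O}_{C'}$ is torsion-free on $\PP^1_k$, so every fibre has length $2$.
\item At a singular point $c$, the ideal $\mathfrak{m}_{f(c)}\mathcal{O}_{C',c}$ is principal while $\mathfrak{m}_c$ is not. So $c$ alone contributes length at least $2$ to its fibre, is the only point over $f(c)$, and is fixed by $\sigma'$.
\end{itemize}
You should also check that $\sigma'$ is defined on $C'$ and not just on the normalization. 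This holds because a rank-$2$ order $A\oplus A\beta$ over $A=\mathcal{O}_{\PP^1_k,x}$ is stable under $\beta\mapsto\mathrm{Tr}(\beta)-\beta$.

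Finally, if $C'$ is reducible, $f$ has degree $1$ on each of its two components. Your justification of Condition~\ref{def:hyperelliptic_ii} of Definition~\ref{def:hyperelliptic} via ``degree $2$ on every component'' therefore does not apply there. Instead, both components are copies of $\PP^1_k$ carrying no cusps, so $P=\emptyset$, and the involution induced by $f$ swaps the two components.
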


\begin{proof}
Because $(C, P)$ is stable, the log-canonical sheaf $\omega_{C/k}(P)$ is ample, and thus so is the pullback $\nu^*\omega_{C'/k} \cong \omega_{C/k}(2P)$. Since $\nu$ is finite surjective, $\omega_{C'/k}$ itself is ample. In the language of \cite[Definition 0.1]{catanese1982pluricanonical}, this means $C'$ is ``canonically positive,'' which is the baseline assumption throughout \cite[Section 3]{catanese1982pluricanonical}. Furthermore, the $2$-inseparability of $C$ ensures $C'$ is ``very strongly connected'' \cite[Definition 3.21]{catanese1982pluricanonical}. We also note that its only singularities are ordinary nodes and cusps, which are double points.

We first establish (ii) $\iff$ (iii). In the terminology of \cite{catanese1982pluricanonical}, condition (ii) means exactly that $C'$ is ``honestly hyperelliptic'' \cite[Definition 3.18]{catanese1982pluricanonical}. We must relate this to Catanese's more general notion of a ``hyperelliptic'' Gorenstein curve \cite[Definition 3.9]{catanese1982pluricanonical}. By \cite[Theorem F]{catanese1982pluricanonical}, these two notions differ only if the curve contains a proper subcurve intersecting its complement in degree $2$. Since $C'$ has only double points, such an intersection would require two nodes to disconnect the curve, which contradicts the fact that $C'$ is very strongly connected. Thus, for $C'$, being honestly hyperelliptic is completely equivalent to being hyperelliptic.

Assume (ii) is false. By the equivalence just established, $C'$ is not hyperelliptic. Because $C'$ satisfies all the requirements of \cite[Theorem G]{catanese1982pluricanonical} (canonically positive, very strongly connected, not hyperelliptic, and only double points), we conclude that $\omega_{C'/k}$ is very ample. Conversely, if (ii) holds, $C'$ is hyperelliptic. By \cite[Proposition 3.10]{catanese1982pluricanonical}, the canonical map of such a curve is not even birational, and hence $\omega_{C'/k}$ is not very ample.

For (i) $\implies$ (ii), let $\sigma$ be the hyperelliptic involution on $(C, P)$. Because $C$ is $2$-inseparable, the quotient $C/\langle\sigma\rangle \simeq \PP^1_k$ is irreducible. Since this quotient consists of only a single component, $\sigma$ cannot act as the identity on a proper subcurve of $C$; therefore, if $\sigma$ acts as the identity locally, $C$ must be irreducible of genus $0$ and $\sigma = \mathrm{id}_C$ globally. Let $\pi \colon C \to \PP^1_k$ denote the quotient map, or the relative Frobenius if $\sigma = \mathrm{id}_C$. We show that $\pi$ factors through $\nu$, i.e. the local image of $\pi^\#$ at each $x \in P$ lies in $\mathcal{O}_{C', \nu(x)} = k + \mathfrak{m}_x^2$. Passing to formal completions $\widehat{\mathcal{O}}_{C, x} \cong k[[t]]$, this subring consists exactly of series with vanishing linear terms. If $\sigma = \mathrm{id}_C$, then $\Char k = 2$ and $\pi$ maps $t \mapsto t^2$, which trivially lies in this subring. If $\sigma \neq \mathrm{id}_C$, the image is the invariant ring $k[[t]]^\sigma$. For $\Char k \neq 2$, linearizing yields $\sigma(t) = -t$ and $k[[t]]^\sigma = k[[t^2]]$. For $\Char k = 2$, we have $\sigma(t) = t + h(t)$ with $0 \neq h(t) \in (t^2)$; for any invariant series $g(t) = \sum c_j t^j$, the relation $g(t + h(t)) - g(t) = 0$ yields a lowest-degree difference of $c_1 h(t) = 0$, forcing $c_1 = 0$. Thus, in all cases, $\pi$ factors locally, inducing the finite morphism $f \colon C' \to \PP^1_k$ of degree $2$.

For (ii) $\implies$ (i), the composition $\tilde{f} = f \circ \nu\colon C \to \PP^1_k$ is finite of degree $2$. If $C'$ is reducible, $f$ restricts to isomorphisms on exactly two components. These components must be smooth, precluding cusps ($P = \emptyset$); thus, $C'$ is a binary curve, and the component-swapping involution induced by $f$ defines the hyperelliptic involution. Assume $C'$ is irreducible. If $f$ is separable, it induces a Galois involution $\sigma'$ on $C'$. By the $2$-inseparability of $C'$, its canonical map is injective on the singular locus \cite[Theorem E]{catanese1982pluricanonical}. Since this map factors through $f$ \cite[Theorem F]{catanese1982pluricanonical}, $\sigma'$ must fix the singular locus, including the cusps. Lifting $\sigma'$ to the partial normalization yields an involution $\sigma$ on $C$ fixing the preimages $P$ pointwise, making $(C, P)$ hyperelliptic. If $f$ is inseparable, then $\Char k = 2$ and it is purely inseparable. In this case $\tilde{f}$ is a homeomorphism onto $\PP^1_k$. This implies $C$ can have no nodes; by semistability, it must therefore be entirely smooth, yielding $C \simeq \PP^1_k$. In this case, setting $\sigma = \id_C$ satisfies all conditions, making $(C, P)$ hyperelliptic.
\end{proof}

The assumptions of stability and $2$-inseparability in Theorem~\ref{thm:canonical_embedding_general_pointed} are automatically satisfied whenever the pinched curve is canonically embedded.

\begin{proposition} \label{prop:canonical_forces_stable}
    Let $C'$ be a connected, projective, reduced curve over $k$ whose only singularities are nodes and ordinary cusps. Assume its dualizing sheaf $\omega_{C'/k}$ is very ample. Let $S \subset C'$ be the set of its cusps, $\nu \colon C \to C'$ the partial normalization at $S$, and $P = \nu^{-1}(S)$. Then $C$ is a $2$-inseparable semistable curve, and the pointed curve $(C, P)$ is stable and non-hyperelliptic.
\end{proposition}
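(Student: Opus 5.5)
The plan is to establish the properties of $C$ in order: semistability, stability of $(C,P)$, $2$-inseparability, and finally non-hyperellipticity via Theorem~\ref{thm:canonical_embedding_general_pointed}.

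\emph{Semistability.} Connectedness of $C$ follows because $\nu$ is a homeomorphism: a cusp has a single preimage. The singularities of $C$ are exactly the nodes of $C'$. Hence $C$ is semistable, and $P$ consists of smooth points, one above each cusp.

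\emph{Stability.} The cusp is locally $k+\mathfrak m^2$, so its conductor is $\mathfrak m_x^2$. This gives $\nu^*\omega_{C'/k}\cong\omega_{C/k}(2P)$, the identity already used in the proof of Theorem~\ref{thm:canonical_embedding_general_pointed}. Very ample implies ample, so $\omega_{C/k}(2P)$ is ample on every component. We need ampleness of $\omega_{C/k}(P)$, i.e.\ $2g(Z)-2+s_Z>0$ for each component $Z$.

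The only failures not already excluded by the ampleness of $\omega_{C/k}(2P)$ are of two kinds. The first is a rational $Z$ with exactly two special points, at least one of them in $P$. The second is a genus-$1$ component with no special points, which is impossible since $C$ is connected and $g\ge2$ (note $h^0(\omega)\geq 2$ for very ampleness). To rule out the rational case, I would use that $\omega_{C'}$ restricted to the image $Z'$ of $Z$ has degree $2g(Z)-2+s_Z+|P\cap Z|\le 2$. If the restriction has degree $1$, a very ample line bundle of degree $1$ forces $Z'$ to be a smooth line, but $Z'$ contains a cusp; contradiction. If it has degree $2$, the embedding maps $Z'$ to a conic. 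Again $Z'$ would have to be smooth, whereas a cusp is present. So stability holds.

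\emph{$2$-inseparability.} This is where very ampleness must separate points and tangent vectors. If $C$ had a separating node $x$ with sides $A,B$, then $\omega_{C'}|_{A'}\cong\omega_{A'}(x)$ and $x$ would be a base point of the restricted system. The standard argument is that every section of $\omega_{C'}$ restricts on $A'$ to a section of $\omega_{A'}$, with residue zero at $x$, so $x$ is a base point. Similarly, a separating pair $\{x,y\}$ with sides $A,B$ gives $\omega_{C'}|_{A'}=\omega_{A'}(x+y)$, and the residues at $x$ and $y$ sum to zero. This makes the image of $\{x,y\}$ fail to be separated, or the node's branches map non-injectively. I would cite \cite[Theorem E / Section 3]{catanese1982pluricanonical}, which states that very ampleness (indeed, even birationality of the canonical map with injectivity on singular points) requires $C'$ to be very strongly connected. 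Since cusps are unibranch, cutting $C'$ at nodes corresponds to cutting $C$ at the same nodes, so $C$ is $2$-inseparable. I expect this step to be the main obstacle: matching Catanese's notion of being very strongly connected, applied to the Gorenstein curve $C'$, with the dual-graph $3$-edge-connectivity of $C$. This requires care with the degree of the intersection $A'\cdot B'$ and with the residue argument.

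\emph{Non-hyperelliptic.} With $(C,P)$ now a $2$-inseparable stable pointed curve whose pinched curve is exactly $C'$, Theorem~\ref{thm:canonical_embedding_general_pointed} (i)$\iff$(iii) applies directly. Since $\omega_{C'/k}$ is very ample, $(C,P)$ is not hyperelliptic.
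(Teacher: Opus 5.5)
Your proposal is correct. It follows the paper for semistability, for $2$-inseparability (via Catanese's Theorems~D and~E in \cite{catanese1982pluricanonical}: a separating node is a base point of $|\omega_{C'/k}|$, and the two nodes of a separating pair are not separated by the canonical map) and for non-hyperellipticity (via Theorem~\ref{thm:canonical_embedding_general_pointed}). The genuine difference is the stability step.

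The paper proves $2$-inseparability first and then reuses it in a degree count. Starting from $\deg(\omega_{C'/k}|_Y) = \deg(\omega_{C/k}(P)|_Z) + r_Y$, it handles two cases:
\begin{itemize}
\item the irreducible case, using $g(C') \ge 3$ and $r_Y \le g(C')$;
\item the reducible case, using $k_Y \ge 3$, which comes from the connectivity just established.
\end{itemize}
You instead use ampleness of $\omega_{C/k}(2P)$ to isolate the only possible failures. These are a smooth rational $Z$ with $(k_Z, r_Z) \in \{(1,1),(0,2)\}$, where $\deg \omega_{C'/k}|_{Z'} = r_Z \in \{1,2\}$. You then rule them out geometrically: the canonical embedding restricts to an embedding of the integral curve $Z'$ with degree at most $2$. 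Such a curve is a line or a smooth conic, so it cannot contain a cusp.

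Your argument makes stability independent of $2$-inseparability and of Catanese's results, and it avoids the irreducible/reducible case split and the bound $g(C') \ge 3$. The paper's route is a uniform numerical check that exploits the connectivity it has already paid for.

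Two minor remarks. First, the genus-$1$ case in your list is already excluded by ampleness of $\omega_{C/k}(2P)$, since its degree there would be $0$. Second, in the separating-pair case the precise statement is that evaluation of sections at $\{x,y\}$ lands in the antidiagonal. So $x$ and $y$ are either both base points or have the same canonical image; your phrase about branches mapping non-injectively is not what is needed.
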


\begin{proof}
    Note that the connectivity terminology of Definition~\ref{def:comb_term} extends naturally to $C'$. Because $\omega_{C'/k}$ is very ample, the canonical linear system $|\omega_{C'/k}|$ is base-point free and induces a closed immersion. By \cite[Theorem D]{catanese1982pluricanonical}, any separating node is a base point of $|\omega_{C'/k}|$, so $C'$ cannot have any separating nodes. Furthermore, by \cite[Theorem E]{catanese1982pluricanonical}, the two nodes of a separating pair necessarily have the same image under the canonical map, which contradicts injectivity. Thus, $C'$ has no separating pairs either. Since cusps are unibranch, it follows that $C$ is $2$-inseparable.

    To show the pointed curve $(C,P)$ is stable, i.e., $\omega_{C/k}(P)$ is ample, we verify $\deg(\omega_{C/k}(P)|_Z) > 0$ for every irreducible component $Z \subset C$. Let $Y = \nu(Z)$ and $r_Y = |P \cap Z|$. The pullback of the dualizing sheaf satisfies $\nu^*(\omega_{C'/k}) \simeq \omega_{C/k}(2P)$. Restricting this to $Z$ and taking degrees yields $\deg(\omega_{C'/k}|_Y) = \deg(\omega_{C/k}(P)|_Z) + r_Y$.

    If $C'$ is irreducible, $Y=C'$. The canonical embedding $C' \hookrightarrow \PP^{g(C')-1}$ strictly requires $g(C') \geq 3$. Thus, $\deg(\omega_{C'/k}|_Y) = 2g(C') - 2 \geq 4$. Since $g(C') = g(Z) + r_Y$, we have $r_Y \leq g(C')$. The difference is $\deg(\omega_{C'/k}|_Y) - r_Y \geq g(C') - 2 \geq 1 > 0$.

    If $C'$ is reducible, $Y$ intersects the rest of the curve exclusively in $k_Y$ nodes. By the $2$-inseparability of $C'$, we must have $k_Y \geq 3$. Adjunction on $C'$ gives $\deg(\omega_{C'/k}|_Y) = 2g(Y) - 2 + k_Y$. Substituting $g(Y) = g(Z) + r_Y$ yields $\deg(\omega_{C'/k}|_Y) - r_Y = 2g(Z) + r_Y - 2 + k_Y$. Since $g(Z), r_Y \geq 0$ and $k_Y \geq 3$, this difference is strictly positive.

    Finally, $C'$ can be identified with the pinched curve associated to the $2$-inseparable stable curve $(C,P)$, and since $\omega_{C'/k}$ is very ample, we conclude with Theorem~\ref{thm:canonical_embedding_general_pointed} that $(C,P)$ is not hyperelliptic.
\end{proof}

\section{Families of Hyperelliptic Curves and Moduli Stacks} \label{sec:moduli}

In this section, we no longer assume our curves are defined over a fixed algebraically closed field. Instead, we extend our intrinsic definition of hyperellipticity to flat families over arbitrary base schemes. We formally define the moduli stack $\Hcbar_g$ of hyperelliptic stable curves as the closure of the smooth hyperelliptic locus $\Hc_g$ inside $\Mcbar_g$. Proceeding from this definition, we characterize the geometric points of $\Hcbar_g$, provide an explicit modular description over $\Spec \ZZ[1/2]$, and finally examine the combinatorial stratification of the boundary to highlight the divergent behavior in characteristic $2$.

\subsection{Families of Hyperelliptic Stable Curves}

\begin{definition} \label{def:family_stable_hyperelliptic}
    Let $S$ be a scheme and let $g \geq 2$ be an integer.
    \begin{enumerate}
        \item A \emph{stable curve of genus $g$ over $S$} \cite{deligne-mumford} is a flat, proper, finitely presented morphism $\pi\colon \mathcal{C} \to S$ whose geometric fibers $\mathcal{C}_{\overline{s}}$ are stable curves of genus $g$ over $\kappa(\overline{s})$ in the sense of Definition~\ref{def:stable_pointed_curve}.
        
        \item \label{def:family_stable_hyperelliptic_ii} A stable curve $\mathcal{C} \to S$ of genus $g$ is \emph{hyperelliptic} if it admits an involution $\sigma \in \Aut_S(\mathcal{C})$ such that on every geometric fiber $\mathcal{C}_{\overline{s}}$, the induced automorphism $\sigma_{\overline{s}}$ is a hyperelliptic involution in the sense of Definition~\ref{def:hyperelliptic}.
    \end{enumerate}
\end{definition}

The automorphism $\sigma$ is referred to as the \emph{(global) hyperelliptic involution}. We note that for smooth families $\mathcal{C} \to S$, this formulation coincides with \cite[Definition~5.4]{lonsted1979basics}. As an immediate consequence of our uniqueness result on the geometric fibers, we obtain the global uniqueness of this involution.

\begin{corollary} \label{cor:unique_hyp}
    The hyperelliptic involution of a hyperelliptic stable curve $\mathcal{C} \to S$ is unique.
\end{corollary}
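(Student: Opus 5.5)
Let $\sigma, \tau \in \Aut_S(\mathcal{C})$ be two hyperelliptic involutions of the stable curve $\mathcal{C} \to S$. We want to show $\sigma = \tau$. The plan has two steps: first compare the two automorphisms fiber by fiber, then pass from the fibers to the whole family. For every geometric point $\overline{s}$ of $S$, the restrictions $\sigma_{\overline{s}}$ and $\tau_{\overline{s}}$ are hyperelliptic involutions of the stable curve $\mathcal{C}_{\overline{s}}$ over the algebraically closed field $\kappa(\overline{s})$. By Theorem~\ref{thm:uniqueness_hyperelliptic} they coincide. So the automorphism $\phi := \tau^{-1}\circ\sigma \in \Aut_S(\mathcal{C})$ restricts to the identity on every geometric fiber.

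It remains to show that an $S$-automorphism of a stable curve that is trivial on all geometric fibers is trivial. I would use the standard representability result of Deligne--Mumford \cite{deligne-mumford}: for a stable curve $\mathcal{C}\to S$ of genus $g\ge 2$, the functor $\mathrm{Isom}_S(\mathcal{C},\mathcal{C})$ is represented by a scheme that is finite and unramified over $S$.
\begin{itemize}
\item The pair $(\phi, \id)$ gives two sections of $\mathrm{Aut}_S(\mathcal{C}) \to S$.
\item Because this group scheme is unramified over $S$, its identity section is an open immersion. Hence the equalizer of the two sections is an open subscheme of $S$; it is also closed, being the preimage of a closed subscheme under a separated morphism.
\item This equalizer contains every point of $S$, since $\phi$ agrees with the identity on all geometric fibers. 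An open subscheme containing all points is all of $S$.
\end{itemize}
Therefore $\phi = \id$ as a morphism, not merely pointwise, and $\sigma = \tau$.

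The main obstacle is exactly this passage from pointwise to scheme-theoretic equality over a possibly non-reduced base $S$. Fiberwise agreement alone says nothing about nilpotent directions, so we genuinely need unramifiedness of $\mathrm{Aut}_S(\mathcal{C})$, which amounts to the vanishing of infinitesimal automorphisms $H^0(C, T_C)$ for stable curves. If one prefers to avoid citing representability, the fallback is to argue infinitesimally: reduce to $S$ local Artinian by a standard limit argument, then proceed by induction on the length. At each step, the difference of two lifts of $\id$ is measured by $\mathrm{Ext}^0(\Omega_{C},\mathcal{O}_C)\otimes I$, and this group vanishes by stability of the closed fiber.
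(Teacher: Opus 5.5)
Your proposal is correct and follows essentially the same route as the paper. Both first get fiberwise agreement from Theorem~\ref{thm:uniqueness_hyperelliptic}, then use the finite, unramified automorphism scheme $\underline{\Aut}_S(\mathcal{C})$ of Deligne--Mumford to see that the locus where $\sigma$ and $\tau$ agree is an open subscheme of $S$ containing every point, hence all of $S$; as you note, openness alone already gives scheme-theoretic equality, and closedness is not needed.
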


\begin{proof}
    By Theorem~\ref{thm:uniqueness_hyperelliptic}, any two hyperelliptic involutions $\sigma$ and $\tau$ agree on every geometric fiber. The relative automorphism group scheme $\underline{\Aut}_S(\mathcal{C})$ is finite and unramified over $S$ \cite[Theorem~1.11]{deligne-mumford}. Consequently, its diagonal is both an open and a closed immersion. The locus in $S$ where $\sigma$ and $\tau$ agree is therefore an open and closed subscheme. Since it contains all topological points, it equals $S$ scheme-theoretically, hence $\sigma = \tau$.
\end{proof}

\subsection{Hyperelliptic Stable Curves as Limits}

The following result demonstrates that our definition is not overly restrictive: any stable limit of a smooth hyperelliptic curve is inherently a hyperelliptic stable curve. We note that this specialization property critically relies on Condition~\ref{def:hyperelliptic_ii} of Definition~\ref{def:hyperelliptic}.

\begin{proposition} \label{prop:specialization_hyperelliptic}
    Let $R$ be a discrete valuation ring with fraction field $K$ and residue field $k$. Let $\mathcal{C}$ be a stable curve over $R$ whose generic fiber $\mathcal{C}_K$ is smooth. Assume there exists an automorphism $\sigma \in \Aut_R(\mathcal{C})$ whose restriction to the generic fiber is a hyperelliptic involution. Then $\mathcal{C}$ is a hyperelliptic stable curve over $R$.
\end{proposition}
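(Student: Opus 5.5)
We must show three things: $\sigma$ is an involution on all of $\mathcal{C}$; on the special fiber $\mathcal{C}_{\bar k}$ the induced $\sigma_{\bar k}$ has rational tree quotient; and, when $\Char k\neq 2$, $\sigma_{\bar k}$ is not the identity on any irreducible component. The generic geometric fiber is already fine by assumption.

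\textbf{Involution.} The automorphism $\sigma^2$ agrees with $\id$ on the generic fiber. Since $\mathcal{C}$ is flat over $R$ and separated, the generic fiber is schematically dense, so $\sigma^2=\id$. Alternatively, one can argue via the finite unramified scheme $\underline{\Aut}_R(\mathcal{C})$, as in the proof of Corollary~\ref{cor:unique_hyp}.

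\textbf{Quotient is a rational tree.} Form $\mathcal{T}:=\mathcal{C}/\langle\sigma\rangle$. This is proper and flat over $R$, since it is an integral-over-$R$ torsion-free quotient, and its generic fiber is $\mathcal{C}_K/\langle\sigma_K\rangle$, a form of $\PP^1$, of genus $0$. The delicate point is that forming the quotient need not commute with base change to $k$ when $2$ is not invertible. I would use the comparison map $\mathcal{C}_k/\langle\sigma_k\rangle\to\mathcal{T}_k$, which is finite, birational and a universal homeomorphism; this follows from the cited Raynaud appendix, which also gives semistability of $\mathcal{C}_{\bar k}/\langle\sigma_{\bar k}\rangle$. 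Now $\mathcal{T}_k$ has arithmetic genus $0$ by flatness and constancy of Euler characteristic. A curve dominated birationally and homeomorphically by a connected reduced curve has arithmetic genus at least that of the source, so $p_a(\mathcal{C}_{\bar k}/\langle\sigma\rangle)\le 0$. Connectedness then forces the quotient to be a rational tree. I expect this base-change comparison in characteristic $2$ (wild case) to be the main obstacle. As a fallback, the genus could be bounded through $H^1(\mathcal{O})^{\sigma}$ and upper semicontinuity: $H^1(\mathcal{C}_k/\sigma,\mathcal{O})$ injects into $H^1(\mathcal{C}_k,\mathcal{O})^{\sigma}$, and the invariant part specializes compatibly from the generic fiber, where it is $0$.

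\textbf{Condition (ii) when $\Char k\neq 2$.} Here $2\in R^\times$, so the quotient commutes with base change, and $\pi\colon\mathcal{C}\to\mathcal{T}$ is finite flat of degree $2$. Suppose $\sigma_{\bar k}$ were the identity on a component $Z$. Then $Z$ lies in the fixed locus $\mathcal{C}^\sigma$. In the tame case $\mathcal{C}^\sigma$ is a closed subscheme that is smooth over $R$ at smooth points of fibers, by linearization of a $\mu_2$-action. At a generic point of $Z$, a smooth point of $\mathcal{C}_k$, the fixed locus therefore has relative dimension equal to that of its special fiber, namely $1$. Hence $\mathcal{C}^\sigma$ contains a horizontal component of dimension $2$ through that point, and then $\sigma_K$ fixes a whole component of $\mathcal{C}_K$. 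This contradicts the fact that $\sigma_K$ is a nontrivial involution with finitely many fixed points.

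Combining the three parts, $\sigma$ is a hyperelliptic involution fiberwise, so $\mathcal{C}$ is a hyperelliptic stable curve over $R$.
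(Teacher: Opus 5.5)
Your arguments for $\sigma^2=\id$, for the rational-tree property when $\Char k\neq 2$, and for Condition~(ii) are sound. In the first case the Reynolds operator $(1+\sigma)/2$ makes the quotient commute with base change; in the second you use smoothness of the tame fixed locus. Together these give a direct proof where the paper simply cites \cite[Proposition~4.3]{kawaguchi2015rank}. The genuine gap is in residue characteristic $2$, the case the paper takes from \cite[Proposition~3.13]{maugeais2003relevement}.

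\textbf{The birationality claim fails.} You assert that $f\colon Y:=\mathcal{C}_k/\langle\sigma_k\rangle\to X:=\mathcal{T}_k$ is birational. This fails over every component $Z$ on which $\sigma_k$ is the identity, and such components are exactly the characteristic-$2$ phenomenon of Example~\ref{ex:char2_exclusive}. Locally, $\sigma(x)=-x$ on $\PP^1_{\ZZ_2}$ reduces to the identity, $\mathcal{T}$ has coordinate $x^2$, and $f$ is the Frobenius $x\mapsto x^2$. In general, $\mathcal{O}_{\mathcal{C},\eta_Z}$ is a DVR with uniformizer $\pi$ and full inertia. This forces ramification index $1$ and a purely inseparable residue extension of degree $2$.

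On such components $f_*\mathcal{O}_Y/\mathcal{O}_X$ has positive rank, so your Euler-characteristic comparison breaks down. Your genus lemma also cannot simply drop the birationality hypothesis. In characteristic $2$, $\Spec k[t^2,t^3]\to\Spec k[t^2]$ is a finite universal homeomorphism from a cuspidal curve onto a smooth one, so the source can have strictly larger arithmetic genus than the target.

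\textbf{The fallback fails too.} Semicontinuity bounds $\dim H^1(\mathcal{C}_k,\mathcal{O})^{\sigma}$ from below, not from above. In mixed characteristic $(0,2)$, $\sigma$ acts as $-1$ on the free module $H^1(\mathcal{C},\mathcal{O})\cong R^g$, hence trivially on $H^1(\mathcal{C}_k,\mathcal{O})$, so the invariants there are $g$-dimensional. In equal characteristic $2$ the generic invariants are already nonzero, because $(\sigma-1)^2=0$.

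\textbf{A repair that avoids birationality.} First, $\mathcal{C}$ is normal (regular in codimension $1$ and Cohen--Macaulay), so $\mathcal{T}$ is a normal, hence Cohen--Macaulay, surface. Since $\pi$ remains a uniformizer at the generic points of $\mathcal{T}_k$, the Cartier divisor $X$ is reduced. After passing to a DVR with residue field $\bar k$, this gives $p_a(X)=0$.

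Next, a finite universal homeomorphism induces purely inseparable maps between the normalizations of corresponding components. It therefore preserves the number of components, their geometric genera, and the number of branches $r$ at each point. Since $Y$ is nodal by Raynaud, its total $\delta$-invariant is $\sum_y(r_y-1)$. This is at most the total $\delta$-invariant of $X$, because $\delta_x\ge r_x-1$ at every point. Hence $0\le p_a(Y)\le p_a(X)=0$.

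Without an argument of this kind, or the citation to Maugeais, the characteristic-$2$ case of your proof remains unproved.
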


\begin{proof}
    For $\Char k \neq 2$, this is precisely \cite[Proposition~4.3]{kawaguchi2015rank}. For $\Char k = 2$, the result follows from \cite[Proposition~3.13]{maugeais2003relevement}.
\end{proof}

Conversely, every hyperelliptic stable curve admits a smoothing to a smooth hyperelliptic curve. This ensures that our definition is not overly broad and captures precisely the intended boundary objects.

\begin{proposition} \label{prop:existence_smooth_lift}
    Let $C$ be a hyperelliptic stable curve over an algebraically closed field $k$. There exists a complete discrete valuation ring $R$ with residue field $k$ and a hyperelliptic stable curve $\mathcal{C}$ over $R$ such that:
    \begin{enumerate}
        \item the special fiber $\mathcal{C}_k$ is isomorphic to $C$; and
        \item the generic fiber $\mathcal{C}_K$ is a smooth hyperelliptic curve.
    \end{enumerate}
\end{proposition}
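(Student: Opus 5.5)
Our plan is to construct the smoothing together with its involution, by deforming the double cover $\pi\colon C\to T$ rather than $C$ alone. We first treat $\Char k\neq 2$. Using Proposition~\ref{prop:node_classification}, we replace $\pi$ by an admissible double cover $\pi^{\mathrm{adm}}\colon C^{\mathrm{adm}}\to T^{\mathrm{adm}}$. To do this, we insert at each node of type $\Xi_{\mathrm{irr}}$ a rational bridge in $C$, on which $\sigma$ acts with two fixed points. At the corresponding smooth point of $T$ we attach a rational component carrying the two branch points. Then every node of $T^{\mathrm{adm}}$ has either one preimage node (types $\Xi$ and $\Xi_{\mathrm{irr}}$, ramified) or two (type $\Delta$, unramified), and the branch divisor is distributed so that $T^{\mathrm{adm}}$ with its branch points is a stable $(2g+2)$-pointed rational tree.

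Next we smooth the pointed tree. We take $R=k[[t]]$ and a smoothing of $(T^{\mathrm{adm}},B)$ to $(\PP^1_K, B_K)$ over $R$, with local equations $xy=t^{e}$ at each node. This is possible because $\Mcbar_{0,2g+2}$ is smooth and its boundary is normal crossings. By Harris--Mumford deformation theory of admissible covers (the tame case, \cite{harris1982kodaira}), the cover extends after a base change $t\mapsto t^2$. Over a ramified node, the local model $uv=s$ with $u^2=x$, $v^2=y$ deforms, and in the unramified case the two sheets deform separately. Each fiber carries the deck involution, which extends over the total space because the double cover is given as $\mathcal{C}^{\mathrm{adm}}=\Spec_{\mathcal{T}}\mathcal{A}$ for a rank-two algebra with its canonical involution. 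The generic fiber is a double cover of $\PP^1_K$ branched in $2g+2$ points, hence smooth hyperelliptic. We then stably contract the inserted bridges. These are exactly the unstable rational bridges, and the involution descends because the contraction is canonical, being defined by $\omega(\text{sections})$. This yields $\mathcal{C}$ with special fiber $C$ and an involution $\sigma$. By Proposition~\ref{prop:specialization_hyperelliptic}, $\mathcal{C}$ is a hyperelliptic stable curve over $R$.

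For $\Char k=2$ we cannot use tame admissible covers, and this is the main obstacle. Our approach is to work with Maugeais's $2$-admissible covers \cite{maugeais2003relevement}. We first perform the modifications at $\Xi_{\mathrm{irr}}$ nodes. On components where $\sigma=\id$, which by Proposition~\ref{prop:hyperelliptic_genus2} are rational tails attached at separating nodes, we compose with Frobenius. This turns $\pi$ into a $2$-admissible cover, and we then invoke Maugeais's lifting theorem \cite[Proposition 5.1 and the lifting results there]{maugeais2003relevement}, which asserts that $2$-admissible covers lift to a complete DVR with residue field $k$. This DVR may be of mixed characteristic, e.g.\ a ramified extension of $W(k)$. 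The point needing the most care is checking that the local wild data at each fixed point (the conductor of the Artin--Schreier extension) is compatible across nodes, as Maugeais requires. We expect this to be automatic from $\pi$ being a genuine quotient, since the quotient construction ensures the matching conditions. Finally, we stabilize as before and use Proposition~\ref{prop:specialization_hyperelliptic} (via \cite[Proposition 3.13]{maugeais2003relevement}) to conclude that the lifted involution is hyperelliptic on every fiber.
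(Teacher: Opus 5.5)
Your characteristic-$2$ paragraph has a genuine gap. It reduces to ``Maugeais lifts $2$-admissible covers'', cited only as ``Proposition~5.1 and the lifting results there''. It also leaves unverified exactly the hypothesis you yourself flag as delicate: compatibility of the wild ramification data across the nodes. The reason you offer, that this should be automatic because $\pi$ is a genuine quotient, is not an argument. If compatibility means that the ramification jumps on the two branches agree, it is false. Glue an ordinary and a supersingular elliptic curve at their origins over $k$ with $\Char k = 2$. This genus-$2$ curve is hyperelliptic (as $\Mcbar_2=\Hcbar_2$) with $\sigma=[-1]$ on both tails, and the node is of type $\Xi$. By Riemann--Hurwitz, $[-1]$ has two fixed points with different exponent $2$ on the ordinary curve and one fixed point with different exponent $4$ on the supersingular curve. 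So the jumps on the two branches at the node are $1$ and $3$. Whatever local condition a lift needs at such nodes is therefore real content of the lifting theorem, not a formal consequence of being a quotient. The gap closes by doing what the paper does: apply \cite[Th\'eor\`eme~5.4]{maugeais2003relevement} with $p=2$ directly to $(C,\sigma)$. Its input is a hyperelliptic stable curve in Maugeais's sense (D\'efinition~3.12), which agrees with Definition~\ref{def:hyperelliptic}, the kummerien condition being Condition~\ref{def:hyperelliptic_ii}. You then need neither to build the $2$-admissible cover yourself nor to check any compatibility, and that one citation covers every characteristic.

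Outside characteristic $2$ your argument is correct and takes a genuinely different route. The paper simply cites Maugeais, or Ekedahl's equivariant smoothing \cite[Proposition~2.2]{ekedahl1995boundary}, which deforms $(C,\sigma)$ directly using that $\sigma$ acts admissibly. You instead go through Harris--Mumford: insert bridges at $\Xi_{\mathrm{irr}}$ nodes, smooth the stable $(2g+2)$-pointed tree in $\Mcbar_{0,2g+2}$, lift the cover after $t=s^2$, and stabilize. This is more explicit: it shows the branch data of the generic fibre and realizes the picture $\Hcbar_g\to[\Mcbar_{0,2g+2}/S_{2g+2}]$ of Section~\ref{subsec:counting}. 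The cost is admissible-cover deformation theory plus a check that $(T^{\mathrm{adm}},B)$ is stable; that check does go through by counting special points component by component.

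One justification must be corrected. An admissible cover is not flat at a ramified node: with $x=u^2$, $y=v^2$, the fibre of $k[[u,v]]/(uv)$ over $x=y=0$ is $k[u,v]/(u^2,uv,v^2)$, of length $3$. So $\mathcal{C}^{\mathrm{adm}}$ is not $\Spec$ of a locally free rank-two algebra. The involution still extends. $\mathcal{C}^{\mathrm{adm}}$ is normal, since its local equations at the nodes have the form $uv=s^{m}$, and it is finite over $\mathcal{T}$. Hence it is the normalization of $\mathcal{T}$ in the function field of its generic fibre, and the deck involution extends by functoriality.
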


\begin{proof}
    This is \cite[Théorème~5.4]{maugeais2003relevement} applied to the case $p=2$, noting that the Kummerian condition translates precisely to our Condition~\ref{def:hyperelliptic_ii}. Alternatively, for $\Char k \neq 2$, the existence of such an equivariant smoothing also follows from \cite[Proposition~2.2]{ekedahl1995boundary}, as Condition~\ref{def:hyperelliptic_ii} guarantees that the involution acts admissibly in the sense of \cite[Definition~1.2]{ekedahl1995boundary}.
\end{proof}

\subsection{The Moduli Stack of Hyperelliptic Stable Curves}

Recall that the moduli stack of stable curves of genus $g \geq 2$, denoted $\Mcbar_g$, is a smooth and proper Deligne--Mumford stack over $\Spec \mathbb{Z}$ \cite{deligne-mumford}. Its open substack $\Mc_g$ parametrizes families of smooth curves.

Because the hyperelliptic involution is unique (Corollary~\ref{cor:unique_hyp}), being hyperelliptic is an intrinsic property rather than additional data, and it naturally satisfies descent. Therefore, the full subcategory $\Hc_g$ of $\Mc_g$ spanned by smooth hyperelliptic curves constitutes a substack, the moduli stack of smooth hyperelliptic curves. It is well known that $\Hc_g$ is an open Deligne--Mumford substack of $\Mc_g$ and that it is smooth over $\Spec \ZZ[1/2]$ \cite{laudal2006deformations, ekedahl1995boundary}.

We define the moduli stack of hyperelliptic stable curves, denoted $\Hcbar_g$, as the scheme-theoretic closure of $\Hc_g$ inside $\Mcbar_g$. By definition, it is a proper Deligne--Mumford stack over $\Spec \ZZ$.

Although this abstract definition establishes the stack structure of $\Hcbar_g$, it does not explicitly characterize its geometric points. The following theorem provides this description.

\begin{theorem} \label{thm:geometric_points}
    Let $k$ be an algebraically closed field. A stable curve $C$ over $k$ represents a geometric point of the stack $\Hcbar_g$ if and only if $C$ is a hyperelliptic stable curve in the sense of Definition~\ref{def:hyperelliptic}.
\end{theorem}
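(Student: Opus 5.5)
Both directions reduce to the two smoothing results of the previous subsection, combined with the fact that geometric points of a closed substack which is the closure of an open dense part are exactly those points reached as specializations of points of that part along a DVR.

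For the ``if'' direction, let $C$ be a hyperelliptic stable curve over $k$. By Proposition~\ref{prop:existence_smooth_lift} there is a complete DVR $R$ with residue field $k$ and a hyperelliptic stable curve $\mathcal{C}\to\Spec R$ with special fiber $C$ and smooth hyperelliptic generic fiber. The classifying morphism $\Spec R\to\Mcbar_g$ sends the generic point into $\Hc_g$. Since $\Hcbar_g$ is closed and $\Spec R$ is reduced and integral, the morphism factors through the scheme-theoretic closure $\Hcbar_g$. The preimage of $\Hcbar_g$ is a closed subscheme containing the generic point, hence all of $\Spec R$. Therefore the closed point $[C]$ lies in $\Hcbar_g$.

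For the ``only if'' direction, let $[C]\in\Hcbar_g(k)$. Since $\Hc_g$ is dense in its closure, I would use the standard valuative lifting for Deligne--Mumford stacks of finite type: a point in the closure of an open substack is the specialization of a point of that substack along a DVR. Concretely, pass to an étale atlas $U\to\Mcbar_g$, note that the preimage of $\Hcbar_g$ is the closure of the preimage of $\Hc_g$, and choose a point of $U$ over $[C]$. Then pick a DVR $R$ (after a finite extension, with residue field extending $k$, and with $K$ large enough) together with a map $\Spec R\to U$ sending the generic point into the preimage of $\Hc_g$ and the closed point to the chosen point. This yields a stable curve $\mathcal{C}/R$ with smooth hyperelliptic generic fiber $\mathcal{C}_K$ and special fiber $C\otimes_k k'$ for an extension $k'$ of $k$.

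Next, extend the hyperelliptic involution of $\mathcal{C}_K$ (possibly after a finite extension of $K$, to make it $K$-rational) to an $R$-automorphism of $\mathcal{C}$. Since $\underline{\Aut}_R(\mathcal{C})$ is finite over $R$ by \cite[Theorem~1.11]{deligne-mumford}, hence proper, the valuative criterion applies after normalizing $R$ in the finite extension. Proposition~\ref{prop:specialization_hyperelliptic} then shows that $\mathcal{C}$ is hyperelliptic over $R$, so the special fiber $C_{\bar k'}$ is hyperelliptic in the sense of Definition~\ref{def:hyperelliptic}.

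The main obstacle is descending from $\bar k'$ to $k$. Here I would use uniqueness, Theorem~\ref{thm:uniqueness_hyperelliptic}. The hyperelliptic involution $\sigma$ of $C_{\bar k'}$ is unique, so it is invariant under $\Aut(\bar k'/k)$ acting on $\Aut(C_{\bar k'})=\Aut(C)$. Since $k$ is algebraically closed, the automorphism scheme of $C$ is finite étale over $k$, hence constant. So $\sigma$ is already defined over $k$. The conditions of being a rational tree and not being the identity on components are insensitive to algebraically closed base field extension, which finishes the argument. A secondary technical point is justifying the DVR-lifting step on the stack; I expect to cite the standard statement for Noetherian schemes via the étale atlas rather than prove it.
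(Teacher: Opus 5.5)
Your proposal is correct and takes the same route as the paper, which likewise reduces both directions to Propositions~\ref{prop:specialization_hyperelliptic} and~\ref{prop:existence_smooth_lift}, using the fact that in the locally Noetherian stack $\Mcbar_g$ the closure is detected by specialization over discrete valuation rings. You make explicit several steps the paper leaves implicit: the specialization argument on an \'etale atlas, extending the generic involution over $R$ via finiteness of $\underline{\Aut}_R(\mathcal{C})$, and descending from $\bar k'$ to $k$; for the last step, constancy of $\underline{\Aut}_k(C)$ already suffices and uniqueness is not needed.
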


\begin{proof}
    Because $\Mcbar_g$ is a locally Noetherian stack \cite[Theorem~5.1]{deligne-mumford}, the scheme-theoretic closure of $\Hc_g$ is determined by specialization over discrete valuation rings. The equivalence therefore follows immediately from the specialization and smoothing results of Proposition~\ref{prop:specialization_hyperelliptic} and Proposition~\ref{prop:existence_smooth_lift}.
\end{proof}

By inverting $2$, we obtain a fully explicit modular description of $\Hcbar_g$.

\begin{theorem}
    Over $\Spec \ZZ[1/2]$, the moduli stack $\Hcbar_g$ is smooth and coincides with the full subcategory of $\Mcbar_g$ consisting of hyperelliptic stable curves in the sense of Definition~\ref{def:family_stable_hyperelliptic}\ref{def:family_stable_hyperelliptic_ii}.
\end{theorem}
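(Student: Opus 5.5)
Write $\mathcal{H}^{\mathrm{st}}_g \subset \Mcbar_g \times \Spec \ZZ[1/2]$ for the full subcategory of hyperelliptic stable curves in the sense of Definition~\ref{def:family_stable_hyperelliptic}\ref{def:family_stable_hyperelliptic_ii}. The plan is to show three things: it is a substack, it is a closed substack, and it is smooth with dense open part $\Hc_g$. Then $\Hcbar_g|_{\ZZ[1/2]}$ and $\mathcal{H}^{\mathrm{st}}_g$ are both reduced closed substacks with the same geometric points (Theorem~\ref{thm:geometric_points}), so they coincide.

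\emph{Substack.} By Corollary~\ref{cor:unique_hyp} the involution is unique, so descent of the involution along fppf covers is automatic. Hence $\mathcal{H}^{\mathrm{st}}_g$ is a substack.

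\emph{Representability and closedness.} Consider the relative automorphism scheme $\underline{\Aut}_S(\mathcal{C})$, which is finite and unramified over $S$. Inside it, the involutions form a closed subscheme $I$, finite and unramified over $S$. The fibrewise hyperelliptic conditions cut out a subset $I^h \subset I$, and I will show $I^h$ is open and closed in $I$.

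\begin{itemize}
\item \emph{Openness.} Over $\ZZ[1/2]$ the involution is tame, so the quotient $\mathcal{C}/\sigma$ commutes with base change. Its fibres are semistable curves whose genus is locally constant in flat families. Condition~\ref{def:hyperelliptic_ii}, that $\sigma$ is not the identity on any component, is equivalent to the fixed locus being finite. By upper semicontinuity of fibre dimension of the fixed scheme, this is an open condition.
\item \emph{Closedness.} I plan to use the valuative criterion for the finite morphism $I \to S$: over a DVR, if the generic fibre is hyperelliptic then so is the special fibre. For smooth generic fibre this is Proposition~\ref{prop:specialization_hyperelliptic}. For general stable generic fibre I would run the same argument via the tame quotient, which is again a flat family of genus $0$ curves, together with the fixed-point argument of \cite{kawaguchi2015rank}. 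I expect this step to be the main obstacle. The first thing to try is to smooth the generic fibre using Proposition~\ref{prop:existence_smooth_lift} over a two-dimensional base and reduce to Proposition~\ref{prop:specialization_hyperelliptic}. An alternative is a direct local analysis at fixed nodes.
\end{itemize}

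Given both, the image of $I^h$ in $S$ is open and closed. By uniqueness, $I^h \to S$ is a monomorphism, and a finite unramified monomorphism is a closed immersion. Hence $\mathcal{H}^{\mathrm{st}}_g \to \Mcbar_g$ is a closed immersion, and it is also open and closed inside the stack of pairs (curve, involution).

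\emph{Smoothness.} Over $\ZZ[1/2]$ it suffices to show that the stack of pairs $(\mathcal{C}, \sigma)$ is smooth at every point with $\sigma$ hyperelliptic. Deformations of $(C, \sigma)$ are the $\sigma$-invariants of the deformations of $C$. Since $2$ is invertible, taking invariants under $\langle\sigma\rangle$ is exact. The obstruction space is therefore $\mathrm{Ext}^2(\Omega_C, \mathcal{O}_C)^{\sigma} = 0$, so equivariant deformations are unobstructed. This is the smoothness statement of \cite[Proposition~2.2]{ekedahl1995boundary}, whose admissibility hypothesis is exactly Condition~\ref{def:hyperelliptic_ii}.

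\emph{Identification.} A smooth stack is reduced. By Proposition~\ref{prop:existence_smooth_lift}, $\Hc_g$ is dense in $\mathcal{H}^{\mathrm{st}}_g$. Since $\Hcbar_g$ is the scheme-theoretic closure of $\Hc_g$, we get $\Hcbar_g|_{\ZZ[1/2]} = \mathcal{H}^{\mathrm{st}}_g$.
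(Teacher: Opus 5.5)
\textbf{Comparison with the paper.} Your outline is sound but takes a different route. The paper never works with the involution scheme. It uses uniqueness to embed the hyperelliptic families into Ekedahl's stack $\Mcbar_g(G)$, $G=\ZZ/2\ZZ$, of stable curves with admissible $G$-action, which is already smooth and \emph{proper} over $\Spec\ZZ[1/2]$. The genus-$0$ quotient condition is open and closed there, so the hyperelliptic locus is a union of connected components and inherits smoothness and properness.

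Your route is more self-contained: $I^h\subset I$, then a finite unramified monomorphism, then tame equivariant deformation theory. (Note that $\mathrm{Ext}^2(\Omega_C,\mathcal{O}_C)=0$ for every nodal curve, so no admissibility hypothesis is needed for unobstructedness.) Your openness argument is fine.

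\textbf{The gap.} It sits exactly where the paper gets something for free from properness of $\Mcbar_g(G)$: you do not prove that $I^h$ is closed in $I$. This step is indispensable. Without it, $I^h\to S$ is only an unramified monomorphism, which need not be an immersion, so you cannot conclude that your substack is closed in $\Mcbar_g$.

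Your first suggested strategy does not work as stated. Proposition~\ref{prop:existence_smooth_lift} concerns curves over an algebraically closed field and yields a one-parameter smoothing. Proposition~\ref{prop:specialization_hyperelliptic} only treats DVRs with smooth generic fibre. Nothing available lets you specialize across a two-dimensional base.

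Separately, the sentence ``the image of $I^h$ in $S$ is open and closed'' is false: for $g\geq 3$ the hyperelliptic locus in a connected chart of $\Mcbar_g$ is closed but not open. What is true, and what you actually use, is that $I^h\to S$ is a closed immersion.

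\textbf{How to close it.} Closedness follows from tools you already invoke. Since $I^h$ is open, hence constructible, it suffices to check stability under specialization along DVRs. So take $\Spec R\to I$ with generic point in $I^h$.
\begin{enumerate}
\item The tame quotient $\mathcal{C}/\langle\sigma\rangle$ is flat over $R$ and commutes with base change. Hence the special-fibre quotient still has arithmetic genus $0$, so $\sigma_{\bar s}$ has a rational-tree quotient.
\item Since $\Hcbar_g$ is closed, Theorem~\ref{thm:geometric_points} shows that $\mathcal{C}_{\bar s}$ carries some hyperelliptic involution $\tau$.
\item Uniqueness already holds among involutions with rational-tree quotient. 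The proofs of Proposition~\ref{prop:node_classification} and Theorem~\ref{thm:uniqueness_hyperelliptic} do not use Condition~\ref{def:hyperelliptic_ii}, as noted in the proof of Proposition~\ref{prop:hyperelliptic_genus2}. Therefore $\sigma_{\bar s}=\tau$.
\end{enumerate}

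Alternatively, your ``local analysis at fixed nodes'' works directly. By the first step $\sigma_{\bar s}\neq\id$. Suppose it were trivial on some component. At a node joining such a component to one where it is nontrivial, the branches are preserved. By tame linearization, $\sigma$ acts on $\widehat{\mathcal{O}}_{\mathcal{C},p}\cong\widehat{R}[[x,y]]/(xy-f)$ by $x\mapsto x$, $y\mapsto -y$. This forces $f=-f$, hence $f=0$. The node and the trivial action on one branch then persist to the generic fibre, contradicting Condition~\ref{def:hyperelliptic_ii} there.
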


\begin{proof}
    Let $\Hcbar'_g$ denote the strictly full subcategory of $\Mcbar_g$ over $\Spec \ZZ[1/2]$ consisting of hyperelliptic stable curves. Because the property of being hyperelliptic is stable under base change, $\Hcbar'_g$ is a category fibered in groupoids. To establish that it forms a stack, it suffices to verify effective descent. If a stable curve becomes hyperelliptic over an \'etale cover, the global uniqueness of the involution (Corollary~\ref{cor:unique_hyp}) ensures that the local involutions automatically glue. Thus, $\Hcbar'_g$ is a stack.

    Let $\Mcbar_g(G)$ denote the stack classifying stable curves of genus $g$ equipped with an admissible action of $G = \ZZ/2\ZZ$ \cite[Definition~3.1]{ekedahl1995boundary}. By \cite[Theorem~3.2]{ekedahl1995boundary}, $\Mcbar_g(G)$ is a smooth and proper Deligne--Mumford stack over $\Spec \ZZ[1/2]$. Because the hyperelliptic involution is uniquely determined, equipping a curve with this canonical $G$-action identifies $\Hcbar'_g$ with a substack of $\Mcbar_g(G)$. Indeed, Definition~\ref{def:hyperelliptic}\ref{def:hyperelliptic_ii} guarantees that the hyperelliptic involution acts admissibly in the sense of \cite[Definition~1.2]{ekedahl1995boundary}. Furthermore, our definition requires the quotient curve to have arithmetic genus $0$. Since the arithmetic genus of the quotient is locally constant in flat families, this additional requirement imposes an open and closed condition. 
    
    Consequently, $\Hcbar'_g$ is isomorphic to a union of connected components of $\Mcbar_g(G)$, which implies that $\Hcbar'_g$ itself is a smooth and proper Deligne--Mumford stack. Finally, $\Hcbar'_g$ contains the locus of smooth hyperelliptic curves $\Hc_g$ as a dense open substack. As $\Hcbar'_g$ is a smooth, proper substack of $\Mcbar_g$ containing $\Hc_g$ as a dense open substack, it coincides with the scheme-theoretic closure $\Hcbar_g$.
\end{proof}

\begin{remark} \label{rem:char2_stack_failure}
    While Definition~\ref{def:family_stable_hyperelliptic} identifies the geometric points of $\Hcbar_g$ across all characteristics (Theorem~\ref{thm:geometric_points}), the category of such families fails to form an algebraic stack over $\Spec \ZZ$.

    To see this, let $\Hcbar'_3$ be the category fibered in groupoids of genus $3$ families satisfying Definition~\ref{def:family_stable_hyperelliptic}. Let $\mathcal{C} \to \Spec \ZZ_2$ be a stable curve constructed by attaching three smooth elliptic curves over $\Spec \ZZ_2$ to a central $\PP^1_{\ZZ_2}$. We equip $\mathcal{C}$ with an involution $\sigma \in \Aut_{\ZZ_2}(\mathcal{C})$ that acts as the identity on the central $\PP^1_{\ZZ_2}$ and as the standard hyperelliptic involution on the three elliptic tails.

    For $S_n = \Spec \ZZ/2^n\ZZ$, the unique geometric point of $S_n$ has characteristic $2$. Because Condition~\ref{def:hyperelliptic_ii} of Definition~\ref{def:hyperelliptic} is vacuous in characteristic $2$, the base change $\mathcal{C}_{S_n}$ belongs to $\Hcbar'_3(S_n)$ for all $n \ge 1$. If $\Hcbar'_3$ were algebraic, the effectiveness of formal objects (\cite[\href{https://stacks.math.columbia.edu/tag/07X8}{Lemma 07X8}]{stacks-project}) would force $\mathcal{C} \in \Hcbar'_3(\Spec \ZZ_2)$. This implies $\mathcal{C}_{\QQ_2} \in \Hcbar'_3(\Spec \QQ_2)$, contradicting Condition~\ref{def:hyperelliptic_ii}, which strictly prohibits the identity action on irreducible components in characteristic $0$.

    Consequently, definitions relying solely on geometric fiber conditions admit spurious nilpotent deformations. The scheme-theoretic closure $\Hcbar_g$ is therefore essential to properly encode the geometry over $\Spec \ZZ$. Providing an explicit modular description of the families $\mathcal{C} \to S$ corresponding to arbitrary morphisms $S \to \Hcbar_g$ over $\Spec \ZZ$ appears to be highly non-trivial. From the alternative point of view where we consider a smooth hyperelliptic curve as a ramified cover of degree $2$ of the projective line, such a modular description has recently been provided by Hippold \cite{hippold2026logarithmic}.
\end{remark}

\subsection{Enumeration of Combinatorial Types} \label{subsec:counting}

It is natural to investigate the stratification of the boundary of $\Hcbar_g$ by combinatorial type. Restricting our attention again to $\Spec \ZZ[1/2]$, it is a straightforward computational exercise to filter the combinatorial types of $\Mcbar_g$ (using, for instance, the SageMath package \texttt{admcycles} \cite{delecroix2022admcycles}) to identify those of semicompact type whose $2$-inseparable components satisfy the conditions of Theorem~\ref{thm:structure_hyperelliptic}. The fact that all such types indeed occur is established in \cite[Theorem~1.12]{kawaguchi2015rank}. For $g \geq 2$, this enumeration yields the sequence:
\[
    7, 32, 190, 1350, 10765, \dots
\]
(The first term is expected, as $\Mc_2 = \Hc_2$.) This sequence appears as a subsequence of OEIS \href{https://oeis.org/A007827}{A007827}. Indeed, this count can be explained theoretically via the well-known morphism of stacks
\[
    \Hcbar_g \longrightarrow [\overline{\mathcal{M}}_{0, 2g+2} / S_{2g+2}],
\]
which maps a hyperelliptic stable curve to the target of its associated admissible cover---a stable rational tree equipped with exactly $2g+2$ unordered branch points. Because this morphism is a $\mu_2$-gerbe \cite{abramovich2003twisted}, it induces a bijection on geometric points. The aforementioned OEIS sequence, for $n \geq 3$, can be seen as counting the possible combinatorial types of $[\overline{\mathcal{M}}_{0, n} / S_{n}]$, which naturally yields the subsequence above when evaluated at $n = 2g+2$. We note that this can equivalently be interpreted as the number of cluster pictures (see \cite[Section 9]{dokchitser2019semistable}).

In characteristic $2$, however, this combinatorial behavior diverges significantly. For instance, in genus $g=3$, there are four combinatorial types that are hyperelliptic \emph{exclusively} in characteristic $2$. These correspond to the curve studied in Example~\ref{ex:char2_exclusive} and its variations where the elliptic tails are replaced by rational nodal curves. Conversely, there are three combinatorial types that are strictly \emph{forbidden} in characteristic $2$ despite being hyperelliptic elsewhere; these correspond exactly to the curve from Example~\ref{ex:char2_forbidden} and its variations. A complete classification of these divergent hyperelliptic boundary strata for genus $3$ curves is provided in \cite[Theorem~1.10]{quartic_paper}, see also \cite[Proposition 2.45]{MaxMaster}.

More broadly, in characteristic $2$, wild ramification alters the branch locus and the aforementioned morphism breaks down. It is no longer a $\mu_2$-gerbe, and the natural bijection between admissible covers and hyperelliptic stable curves is consequently lost. For example, outside characteristic $2$, one can uniquely compute the stable reduction of a smooth genus $2$ curve by marking its Weierstrass points, computing the reduction of the marked curve, and then forgetting the markings and stabilizing. In characteristic $2$, this correspondence fails completely, and the reduction process becomes significantly more intricate, requiring the consideration of $54$ distinct intermediate cases \cite{gehrunger2025reduction}.

% --- Bibliography ---
\renewcommand{\bibfont}{\small}
\printbibliography

\end{document}